\documentclass[letterpaper,12pt]{article}
\usepackage{amsmath,amssymb,amsfonts,epsf,epsfig}
\newtheorem{theo}{Theorem}[section]
\newtheorem{prop}[theo]{Proposition}
\newtheorem{prob}[theo]{Problem}
\newtheorem{lemma}[theo]{Lemma}

\newtheorem{cor}[theo]{Corollary}

\newtheorem{conj}[theo]{Conjecture}
\newtheorem{cons}[theo]{Construction}

\newenvironment{proof}{\noindent {\sc Proof}.}
                {\phantom{a} \hfill \framebox[2.2mm]{ } \bigskip}

\newenvironment{proofof}{\noindent {\sc Proof of Theorem}}
                {\phantom{a} \hfill \framebox[2.2mm]{ } \bigskip}

\topmargin      -0.5in  % distance to headers
\headheight     0.2in   % height of header box
\headsep        0.3in   % distance to top line
\textheight     9in   % height of text
\footskip       0.3in   % distance from bottom line
%\footheight     0.2in   % height of footer box
\oddsidemargin  0.in \evensidemargin 0.in \textwidth      6.5in

\newcommand{\ZZ}{\mathbb{Z}}

\newcommand{\C}{{\rm Circ}}

\title{On the directed Oberwolfach Problem \\ with equal cycle lengths: the odd case}
\author{Andrea Burgess \\
{\small University of New Brunswick} \\ \\
Nevena Franceti\'{c} \\
{\small University of Ottawa} \\ \\
Mateja \v{S}ajna\footnote{Corresponding author. Email: msajna$@$uottawa.ca. Mailing address: Department of Mathematics and Statistics, University of Ottawa, 585 King Edward Avenue, Ottawa, ON, K1N 6N5, Canada.} \\
{\small University of Ottawa}}

\begin{document}
\maketitle \baselineskip 17pt

\begin{abstract}
We show that the complete symmetric digraph $K_{2m}^\ast$ admits a resolvable decomposition into directed cycles of length $m$ for all odd $m$, $5 \le m \le 49$. Consequently, $K_{n}^\ast$ admits a resolvable decomposition into directed cycles of length $m$ for all  $n \equiv 0 \pmod{2m}$ and odd $m$, $5 \le m \le 49$.

\medskip
\noindent {\em Keywords:} Directed Oberwolfach Problem; complete symmetric digraph; resolvable directed cycle decomposition; Mendelsohn design
\end{abstract}

%%%%%%%%%%%%%%%%%
% INTRODUCTION %
%%%%%%%%%%%%%%%%%

\section{Introduction}

In this paper, we are concerned with the problem of decomposing the complete symmetric digraph $K_n^\ast$ into spanning subdigraphs, each a vertex-disjoint union of directed cycles of length $m$. Thus, we are interested in the following problem.

\begin{prob}\label{prob}
Determine the necessary and sufficient conditions on $m$ and $n$ for the complete symmetric digraph $K_n^\ast$ to admit a resolvable decomposition into directed $m$-cycles.
\end{prob}

In the design-theoretic literature, such decompositions have also been called Mendelsohn designs \cite{ColDin}. Problem~\ref{prob} can also be viewed as the directed version of the well-known Oberwolfach Problem with uniform cycle lengths, which was completely solved in \cite{AlsHag,AlsSch}.

It is easily seen that $K_n^\ast$ admits a resolvable decomposition into directed $m$-cycles only if $m$ divides $n$, and this condition is obviously sufficient if $m=2$. Problem~\ref{prob} has also been solved previously for $m=3$  \cite{BerGerSot} and for $m=4$ \cite{BenZha,AdaBry}: the necessary conditions are sufficient except for $(m,n)=(3,6)$ and $(4,4)$. More recently, two of the present authors showed the following.

\begin{theo} \cite{BurSaj}\label{the:BurSaj}
Let $m$ and $n$ be integers with $5 \le m \le n$. Then the following hold.
\begin{enumerate}
\item Let $m$ be even, or $m$ and $n$ be both odd. Then there exists a resolvable decomposition of $K_n^\ast$ into  directed $m$-cycles if and only if $m$ divides $n$ and $(m,n)\ne(6,6)$.
\item If there exists a resolvable decomposition of $K_{2m}^\ast$ into directed $m$-cycles, then there exists a resolvable decomposition of $K_n^\ast$ into  directed $m$-cycles whenever $n \equiv 0 \pmod{2m}$.
\end{enumerate}
\end{theo}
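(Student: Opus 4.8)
The plan is to prove the necessary condition first, then split the sufficiency proof according to the parity of $m$, and finally establish the second statement by a blow-up (recursive) construction. For necessity, note that a resolvable decomposition of $K_n^\ast$ into directed $m$-cycles is exactly a partition of the arc set into $n-1$ directed $2$-factors, each a vertex-disjoint union of directed $m$-cycles (each vertex has in- and out-degree $n-1$, and each directed $2$-factor uses one of each). Since every such $2$-factor spans all $n$ vertices as a union of $m$-cycles, we get $m\mid n$. For the excluded pair $(6,6)$ we have $m=n$, so any admissible $2$-factor is a single directed Hamilton cycle; by Tillson's theorem $K_6^\ast$ has no decomposition into directed Hamilton cycles, so $(6,6)$ is genuinely excluded, and indeed the Hamiltonian subcase $n=m$ is handled throughout by Tillson's theorem (valid for $m\ne 4,6$).

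The key device for sufficiency is a \emph{doubling lemma}: an undirected $m$-cycle $C$, taken with its two orientations, gives two arc-disjoint directed $m$-cycles whose arcs are exactly the pair of arcs over each edge of $C$. Hence any undirected $2$-factorization of a graph $H$ into $m$-cycle $2$-factors lifts to a resolvable directed $m$-cycle decomposition of $H^\ast$ with twice as many resolution classes. When $m$ and $n$ are both odd I would apply this with $H=K_n$: the solution of the uniform-length Oberwolfach problem \cite{AlsHag,AlsSch} supplies a resolvable $m$-cycle $2$-factorization of $K_n$ whenever $m\mid n$, $m\ge 5$ and $n$ is odd (all four exceptional orders of the uniform problem have $n$ even, so none occur here), and doubling then yields the desired decomposition of $K_n^\ast$.

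The even case is where I expect the real difficulty. Here $m$ even and $m\mid n$ force $n$ even, so $K_n$ has no $2$-factorization; the substitute is to write $K_n=(K_n-I)\cup I$ for a $1$-factor $I$, take a resolvable $m$-cycle $2$-factorization of $K_n-I$ (again from \cite{AlsHag,AlsSch}, with $m\ge 6$ avoiding all exceptions), and double it. This covers every arc except those lying over $I$, which form a single $2$-factor of $n/2$ digons rather than directed $m$-cycles. The crux is to \emph{absorb} these digons: I would combine $I^\ast$ with one doubled class $\rvec{F}$ and re-decompose the $2$-diregular digraph $\rvec{F}\cup I^\ast$ into two directed $m$-cycle $2$-factors, choosing the undirected factorization, the $1$-factor $I$, and the orientations compatibly so each matching arc splices into a genuine $m$-cycle. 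Guaranteeing this compatibility in general, or equivalently building the final two resolution classes directly by a difference/starter construction on $\ZZ_n$ that already carries the matching arcs inside $m$-cycles, is the main obstacle; I would expect to pin down the smallest values of $n/m$ (and confirm that $(6,6)$ is the only failure) by explicit, possibly computer-assisted, base constructions before extending by a uniform pattern.

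For the second statement I would use a standard blow-up. Writing $n=2mk$, partition the vertices into $k$ groups of size $2m$ and split the arcs of $K_n^\ast$ into intra-group arcs (a disjoint union of $k$ copies of $K_{2m}^\ast$) and inter-group arcs (the complete multipartite symmetric digraph with $k$ groups of size $2m$). Overlaying the hypothesized resolvable directed $m$-cycle decompositions of the $k$ copies of $K_{2m}^\ast$ in parallel yields $2m-1$ resolution classes covering all intra-group arcs. It then remains to decompose the inter-group digraph resolvably into directed $m$-cycles, which I would obtain by blowing up a directed Hamilton decomposition of the quotient $K_k^\ast$ (Tillson's theorem, with the few small $k$ treated separately) and replacing each blown-up arc between two groups by a resolvable directed $m$-cycle $2$-factorization of the complete symmetric bipartite-type digraph on those two $2m$-sets, aligned into $2m(k-1)$ global classes. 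Since $(2m-1)+2m(k-1)=n-1$, the two families together form the required resolution; the delicate point is supplying and synchronizing the inter-group building block, which becomes routine once a single such bipartite decomposition is established.
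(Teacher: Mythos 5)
This statement is quoted from \cite{BurSaj}; the paper under review gives no proof of it, so your attempt can only be measured against the general outline of that external paper and against internal correctness. Your necessity argument and your treatment of the case $m,n$ both odd are sound and standard: doubling the two orientations of each $m$-cycle in a resolvable $C_m$-factorization of $K_n$ (which exists by \cite{AlsHag,AlsSch} since the uniform Oberwolfach exceptions all have $m=3$) does produce the required $n-1$ directed resolution classes, and Tillson's theorem correctly explains the exclusion of $(6,6)$. However, for $m$ even you have not proved anything: the absorption of the $n/2$ digons over the $1$-factor $I$ into two directed $m$-cycle classes is exactly the nontrivial content of that case, and you explicitly leave it as ``the main obstacle'' to be settled later by unspecified base constructions. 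As written, part 1 for even $m$ is a gap, not a proof.

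Part 2 contains a step that would fail outright. You propose to cover the inter-group arcs by blowing up a Hamilton decomposition of $K_k^\ast$ and replacing each blown-up arc by a resolvable directed $m$-cycle decomposition of the complete bipartite-type digraph between two groups of size $2m$. A bipartite digraph contains no directed cycle of odd length, so for odd $m$ --- precisely the case this paper needs, and a case part 2 of the theorem must cover --- no such building block exists; an $m$-cycle using only inter-group arcs must meet at least three groups. Moreover, even for a single blown-up Hamilton dicycle of $K_k^\ast$, every directed cycle respecting the cyclic group order has length a multiple of $k$, so the reduction to two-group blocks cannot be repaired by rearranging. The standard correct route (and the one consistent with \cite{BurSaj}) is to take a resolvable $C_m$-factorization of the complete equipartite \emph{graph} $K_{k\times 2m}$ (available from the known solution of the equipartite uniform Oberwolfach problem) and double its orientations, which automatically yields $m$-cycles spanning several groups. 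Your class count $(2m-1)+2m(k-1)=n-1$ is right, but the object you count with does not exist.
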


In the same paper, we also posed the following conjecture.

\begin{conj}\cite{BurSaj}\label{conj}
Let $m$ be a positive odd integer. Then  $K_{2 m}^\ast$ admits a resolvable directed $m$-cycle decomposition if and only if $m\ge 5$.
\end{conj}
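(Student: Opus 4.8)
The necessity of $m\ge 5$ is the routine direction: for $m=1$ there is no directed $1$-cycle in a loopless digraph, and for $m=3$ it is classically known that $K_6^\ast$ has no resolvable directed $3$-cycle decomposition (the $(m,n)=(3,6)$ exception recorded in the introduction). Hence the entire content of the statement is the existence, for \emph{every} odd $m\ge 5$, of a resolvable decomposition of $K_{2m}^\ast$ into directed $m$-cycles. Since on $2m$ vertices such a decomposition has $2m-1$ parallel classes, each a spanning union of exactly two directed $m$-cycles, the plan is to generate every class from a single base class by a group action, that is, to build a $1$-rotational decomposition.

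Concretely, I would take the vertex set to be $\ZZ_{2m-1}\cup\{\infty\}$ with $\ZZ_{2m-1}$ acting by translation (fixing $\infty$), and seek one base $2$-factor $B$ — a pair of vertex-disjoint directed $m$-cycles covering $\ZZ_{2m-1}\cup\{\infty\}$ — whose $2m-1$ translates $B,B+1,\dots,B+(2m-2)$ partition the arc set of $K_{2m}^\ast$. Each translate is automatically a spanning union of two directed $m$-cycles, so resolvability is free; the sole requirement is that the translates cover every arc exactly once. A short orbit count reduces this to a purely combinatorial ``di-starter'' condition: $B$ must contain exactly one arc into $\infty$ and one arc out of $\infty$, and among its $2m-2$ finite-to-finite arcs each nonzero difference in $\ZZ_{2m-1}$ must occur exactly once. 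Equivalently, one must partition the finite vertices $\ZZ_{2m-1}$ into a set $A$ of size $m$ carrying a directed $m$-cycle and a set $C$ of size $m-1$ carrying a directed Hamilton path, so that the $m$ cycle-differences together with the $m-2$ path-differences realize each of $1,\dots,2m-2$ exactly once, the two ends of the path being joined through $\infty$. A reassuring sanity check is that $\sum_{d=1}^{2m-2} d=(m-1)(2m-1)\equiv 0 \pmod{2m-1}$, consistent with the requirement that the cycle-differences sum to $0$; so there is no immediate parity obstruction.

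The remaining, genuinely hard task is to exhibit such a di-starter for \emph{all} odd $m\ge 5$ simultaneously. I would attack this with explicit Skolem/Langford/Rosa-type difference constructions, splitting $m$ into a few residue classes modulo a small modulus and writing down, in each class, a parametric partition of $\{1,\dots,2m-2\}$ together with the corresponding cyclic and path arrangements; the finitely many small or irregular values of $m$ would be handled individually (this is precisely the regime in which the present paper certifies the claim for $5\le m\le 49$). Should the $1$-rotational starter prove intractable for some residue class, a natural fallback is a $2$-rotational scheme over $\ZZ_m\times\ZZ_2$ using two base factors, which relaxes the single-orbit difference constraints at the cost of constructing a second starter.

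The main obstacle is exactly this uniformity: producing one family of di-starters that provably works for every odd $m\ge 5$. The difference-covering and cycle-closure conditions interact in a way that resists a single clean pattern, and different residues of $m$ appear to demand genuinely different sequence constructions — which is why a complete proof of the statement remains out of reach and only a bounded range of $m$ can currently be verified. Establishing the eventual periodicity (in $m$) of these sequence constructions, or finding a recursive step that advances $m$ by a fixed period while preserving the di-starter property, is in my view the key missing ingredient.
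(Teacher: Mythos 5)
You have not proved the statement, and you say as much yourself; but it is worth being precise about where things stand. First, the statement is a \emph{conjecture}: the paper itself does not prove it in full, but only confirms it for odd $5 \le m \le 49$ (Theorem~\ref{the:main}), remarking explicitly that the general conjecture appears difficult. Your reduction is correct as far as it goes: taking vertex set $\ZZ_{2m-1} \cup \{\infty\}$ with $\ZZ_{2m-1}$ acting by translation, the arc orbits are one orbit of arcs into $\infty$, one orbit of arcs out of $\infty$, and one orbit of length $2m-1$ for each nonzero difference, so a base $2$-factor $B$ whose translates decompose $K_{2m}^\ast$ must contain exactly one arc in and one out of $\infty$ and each difference in $\{1,\ldots,2m-2\}$ exactly once among its $2m-2$ finite arcs; and since translation is an automorphism, each translate is again a spanning pair of directed $m$-cycles, so resolvability is indeed automatic. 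The genuine gap is the one you name: no starter is exhibited for even a single parametric family of $m$, and no argument is offered that the closure constraint (the $m$ differences on the $m$-cycle summing to $0$ modulo $2m-1$) can be reconciled with the exact-difference-cover condition in general --- your sanity check that all differences sum to $0$ is necessary but nowhere near sufficient. So what you have is a legitimate reframing of the conjecture as a di-starter existence problem, not a proof, and that existence problem is plausibly as hard as the conjecture itself.

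It is also worth noting that your program is genuinely different from the paper's (partial) route. The paper does not use a $1$-rotational starter over $\ZZ_{2m-1} \cup \{\infty\}$: it splits the $2m$ vertices into two halves $X$ and $Y$ of size $m$ and rotates two base cycles at \emph{different speeds} on the two halves (adding $i$ to subscripts in $X$ and $2i$ to subscripts in $Y$, as in Construction~\ref{cons}), yielding $m$ resolution classes that use up all but $2m$ of the mixed-difference arcs; the leftovers are absorbed into two hand-built classes $R_m$ and $R_{m+1}$ together with a few pure-difference linking arcs and paths, and the residue of $D[X]$ and $D[Y]$ consists of circulant digraphs handled by Lemma~\ref{lem:BerFavMah} and by computer-found difference sets $S$, $S^X$, $S^Y$ --- which is exactly why the paper's result is bounded at $m \le 49$ and requires case splits modulo $3$ and $4$ plus ad hoc treatment of $m = 5, 9, 11$. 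The trade-off between the two approaches: yours would, if the starters existed, produce all $2m-1$ parallel classes in one stroke with no residual circulant analysis, whereas the paper's reduces the unknown part to finite searches on $m$ points at the cost of a more intricate construction; but neither closes the conjecture, and your missing ingredient --- a uniform family of directed starters with controlled in- and out-differences, or a recursion advancing $m$ by a fixed period --- is precisely the kind of Skolem/Rosa-type construction that remains open in the directed setting.
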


Observe that proving Conjecture~\ref{conj} (which appears to be difficult) would complete the solution to Problem~\ref{prob}. In this paper, we confirm the above conjecture for all $m \le 49$. Thus, we prove the following result.

\begin{theo}\label{the:main}
Let $m$ be an odd integer, $5 \le m \le 49$. Then $K_{2m}^\ast$ admits a resolvable decomposition  into  directed $m$-cycles.
\end{theo}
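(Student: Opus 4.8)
The plan is to construct the decomposition by the method of differences, reducing the task of producing all $2m-1$ parallel classes to that of exhibiting a small number of base factors that are then developed under a cyclic group action. First I would recast the statement in the standard form: a resolvable decomposition of $K_{2m}^\ast$ into directed $m$-cycles is a partition of its $2m(2m-1)$ arcs into $2m-1$ spanning subdigraphs (factors), each being a vertex-disjoint union of two directed $m$-cycles. The natural algebraic scaffolding is a cyclic group action. I would identify the vertex set either with $\ZZ_{2m-1}\cup\{\infty\}$ (letting $\ZZ_{2m-1}$ act by $i\mapsto i+1$ and fixing $\infty$) or with $\ZZ_{2m}$, and classify each finite arc $(i,j)$ by its difference $j-i$; developing a single arc under the group sweeps out an entire difference class of size equal to the group order.

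With this framework, the problem reduces to a starter-type condition: I seek base factors whose arcs, upon being developed, tile every arc of $K_{2m}^\ast$ exactly once. Over $\ZZ_{2m-1}\cup\{\infty\}$ this means the finite–finite arcs of the base factors must realize each nonzero difference of $\ZZ_{2m-1}$ the correct number of times, while the two $\infty$-incident arcs sweep out the arcs meeting $\infty$; over $\ZZ_{2m}$ the analogous condition must account for the involutory difference $m$. The subtle point, which I expect to dictate the whole construction, is a parity/sum obstruction: the differences traversed around any finite directed cycle sum to $0$, and the full difference set $\{1,\dots,2m-2\}$ already sums to $(m-1)(2m-1)\equiv 0\pmod{2m-1}$, so the finite differences of the cycle through $\infty$ are forced to satisfy a nontrivial constraint; similarly, the self-paired difference $m$ in $\ZZ_{2m}$ cannot be absorbed into an invariant factor and must be handled separately. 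These constraints rule out a single naive rotational starter and instead force a deliberate mixture of full-orbit base factors together with $\infty$-anchored (or short-orbit) ones, the precise mixture depending on the arithmetic of $m$ (in particular on the factorization of $m$ and of $2m-1$).

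For each odd $m$ with $5\le m\le 49$, I would then locate suitable base factors by a targeted computer search — backtracking or hill-climbing over candidate directed $m$-cycles meeting the difference and parity constraints — record the resulting base factors in tables, and verify directly that developing them under the chosen group action yields the $2m-1$ disjoint spanning pairs of $m$-cycles. The main obstacle is the search itself: the space of admissible base factors grows rapidly with $m$, and no single pattern is known to satisfy the difference and sum conditions simultaneously for every $m$, which is exactly why Conjecture~\ref{conj} can be confirmed only in a finite range rather than in full generality. By contrast, once the base factors are in hand, the verification that they develop to a genuine resolvable decomposition is routine, reducing to checking that every difference class and every $\infty$-incident arc is covered exactly once and that each developed factor splits into two vertex-disjoint directed $m$-cycles.
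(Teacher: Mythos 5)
Your proposal is a plan for a proof rather than a proof: the theorem is an existence statement whose content, on any approach, ultimately resides in explicit constructions, and you supply none of them. Worse, the one concrete structural observation you make --- the sum-of-differences obstruction --- actually kills the scheme you describe before the search even begins. Over $\ZZ_{2m-1}\cup\{\infty\}$ a single base factor developed through a full orbit would need its $2m-2$ finite--finite arcs to realize each nonzero difference exactly once; but the finite differences along the cycle through $\infty$ sum to $a_{m-1}-a_1\ne 0$ (the terminal minus the initial finite vertex), while the complementary cycle's differences sum to $0$ and the whole difference set sums to $0\pmod{2m-1}$, a contradiction. You acknowledge this and gesture at ``a deliberate mixture of full-orbit base factors together with $\infty$-anchored or short-orbit ones,'' but since $2m-1$ is odd and often prime in the range at hand, there are no nontrivial short orbits to exploit for most $m$, and you never specify a mixture that is even numerically consistent, let alone exhibit one. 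Finally, the feasibility claim is doubtful: searching for entire starters in $K_{2m}^\ast$ for $m$ up to $49$ (a digraph on $98$ vertices) is exactly the kind of search that does not scale, which is why such data does not appear to exist in the literature.

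The paper avoids all of this by splitting the vertex set into two halves $X\cup Y$ of size $m$ and classifying arcs by pure left, pure right, and mixed differences. A single explicit construction (Construction~\ref{cons}) produces $m$ resolution classes consuming all mixed differences except $2m$ leftover arcs, plus one pure difference on each side; two further hand-built classes absorb the leftovers using a few linking arcs of pure difference; and everything that remains lives inside the two circulants $D[X]$ and $D[Y]$, whose $\vec{C}_m$-decompositions are guaranteed by Lemma~\ref{lem:BerFavMah} and pair up across the two sides into resolution classes. The computer search is thereby reduced to finding a small difference set $S\subseteq\ZZ_m^\ast$ and a decomposition of one small circulant minus a handful of arcs into $m$-cycles and two or three prescribed paths --- a search over a digraph on $m\le 49$ vertices of bounded out-degree, with the results tabulated in the appendices. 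If you want to salvage your approach you must either produce the starters themselves or, as the paper does, first prove a reduction that makes the residual search small enough to carry out and report.
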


Except for the smallest case $m=5$,  the above theorem is proved using a general construction that is complemented with a computational result. We expect that with more computing power, this approach can be used to extend our result to even larger values of $m$.

Theorems~\ref{the:BurSaj} and \ref{the:main} immediately yield the following.

\begin{cor}
Let $m$ be an odd integer, $5 \le m \le 49$. Then $K_n^\ast$ admits a resolvable decomposition  into  directed $m$-cycles whenever $n \equiv 0 \pmod{2m}$.
\end{cor}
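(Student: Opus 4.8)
The plan is to derive this corollary directly from the two results already in place, with essentially no new work. Fix an odd integer $m$ with $5 \le m \le 49$. The first step is to apply Theorem~\ref{the:main}, which asserts exactly that $K_{2m}^\ast$ admits a resolvable decomposition into directed $m$-cycles for every such $m$. This provides the base configuration on a digraph of order $2m$.

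The second step is to feed this base configuration into part~(2) of Theorem~\ref{the:BurSaj}. That statement is a lifting (or ``doubling'') result: the mere \emph{existence} of a resolvable directed $m$-cycle decomposition of $K_{2m}^\ast$ forces the existence of a resolvable directed $m$-cycle decomposition of $K_n^\ast$ for every $n$ with $n \equiv 0 \pmod{2m}$. Chaining the two implications together immediately yields the desired conclusion for all admissible $n$ at once.

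I do not expect any genuine obstacle at this level. There is no case analysis, no explicit construction, and no arithmetic to verify beyond checking that the hypotheses of the two cited results match: Theorem~\ref{the:main} supplies precisely the hypothesis required by Theorem~\ref{the:BurSaj}(2). All of the real difficulty lives one level down, in the proof of Theorem~\ref{the:main}, where the resolvable decomposition of $K_{2m}^\ast$ must actually be constructed for each odd $m$ in the range; granting that, the extension to arbitrary multiples of $2m$ is purely formal.
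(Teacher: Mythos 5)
Your proposal is correct and is exactly the paper's argument: the corollary is stated as an immediate consequence of Theorem~\ref{the:main} combined with Theorem~\ref{the:BurSaj}(2), precisely the two-step chaining you describe. Nothing further is needed.
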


%%%%%%%%%%%%%%%%%
% PRELIMINARIES %
%%%%%%%%%%%%%%%%%

\section{Preliminaries}

In this paper, the term  {\em digraph} will mean a directed graph with no loops or multiple arcs. The symbol $K_n^\ast$ denotes the {\em complete symmetric digraph} of order $n$; that is, the digraph with $n$ vertices, and with arcs $(u,v)$ and $(v,u)$ for each pair of distinct vertices $u$ and $v$.

For a digraph $D=(V,A)$, a subset $V' \subseteq V$ of its vertex set, and subset $A'\subseteq A$ of its arc set, the symbols $D[V']$ and $D-A'$ will denote the subdigraph of $D$ induced by $V'$, and the subdigraph obtained from $D$ by deleting all arcs in $A'$, respectively. If $D$ is a spanning subdigraph of the complete symmetric digraph $K_n^\ast$ and $A' \subseteq A(K_n^\ast)-A(D)$, then $D+A'$ will denote the digraph $(V(D),A(D) \cup A')$.

A {\em decomposition} of a digraph $D$ is a collection $\{ H_1, H_2, \ldots, H_k \}$ of subdigraphs of $D$ whose arc sets partition the arc set of $D$. If each of the digraphs $H_i$ is isomorphic to a digraph $H$, then $\{ H_1, H_2, \ldots, H_k \}$ is called an {\em $H$-decomposition} of the digraph $G$.

A {\em resolution class} (or {\em parallel class}) of a decomposition ${\cal D}=\{ H_1, H_2, \ldots, H_k \}$ of $D$  is a subset $\{ H_{i_1}, H_{i_2}, \ldots, H_{i_t} \}$ of $\cal D$ with the property that the vertex sets of the digraphs $H_{i_1}, H_{i_2}, \ldots, H_{i_t} $ partition the vertex set of $D$. A decomposition  is called {\em resolvable} if it can be partitioned into resolution classes.

By $\vec{C}_m$ we shall denote the directed cycle of length $m$. The terms  $\vec{C}_m$-decomposition and  resolvable $\vec{C}_m$-decomposition will be abbreviated as $\vec{C}_m$-D and R$\vec{C}_m$-D, respectively.

For a positive integer $m$ and $S \subseteq \ZZ_m^\ast$, the digraph with vertex set $\ZZ_m$ and arc set $\{ (i,i+d): i\in \ZZ_m, d \in S \}$, denoted $\C(m;S)$, is called the {\em directed circulant of order $m$ with connection set $S$}.
The following result, a direct corollary of \cite{BerFavMah}, will be an important ingredient in our constructions.

\begin{lemma}\cite{BerFavMah}\label{lem:BerFavMah}
Let $m$ be a positive integer and $S \subseteq \ZZ_m^*$.  Assume $S$  can be partitioned into sets of the form
\begin{itemize}
\item $\{ d \}$ such that $\gcd(d,m)=1$ and
\item $\{\pm d_i, \pm d_j\}$ such that $\gcd(d_i,d_j,m)=1$.
%(In particular, this holds if  $1 \le d_i<d_j \le \frac{m-1}{2}$ and $d_j-d_i \le 2$ if $3|m$, or $d_j-d_i \le 4$ otherwise.)
\end{itemize}
Then the directed circulant $\C(m;S)$ can be decomposed into directed $m$-cycles.
\end{lemma}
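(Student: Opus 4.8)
The plan is to recognize that, once the partition of $S$ is fixed, the decomposition problem splits across the parts, and that the only substantial part---the four-element one---is precisely a directed reformulation of the Hamilton-decomposition theorem for connected $4$-regular Cayley graphs on abelian groups proved in \cite{BerFavMah}. Concretely, since the arc set of $\C(m;S)$ is the disjoint union, over the parts $S'$ of the partition, of the arc sets of the subcirculants $\C(m;S')$, it suffices to produce a directed $m$-cycle decomposition of each $\C(m;S')$ and then take the union. So I would handle the two types of parts separately.

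A singleton part $S'=\{d\}$ with $\gcd(d,m)=1$ is immediate: the arcs $(i,i+d)$, $i \in \ZZ_m$, trace the single closed walk $0, d, 2d, \ldots, (m-1)d, 0$, which visits every vertex exactly once precisely because $d$ generates $\ZZ_m$; hence $\C(m;\{d\})$ is itself a directed $m$-cycle. For a part $S'=\{\pm d_i,\pm d_j\}$ with $\gcd(d_i,d_j,m)=1$, the key observation is that $\C(m;S')$ is the ``doubling'' of the undirected circulant $G$ on $\ZZ_m$ with connection set $\{d_i,d_j\}$: each edge $\{u,v\}$ of $G$ corresponds to the two oppositely directed arcs $(u,v)$ and $(v,u)$ of $\C(m;S')$, since $(i,i+d_i)$ and $(i+d_i,i)$ are exactly the arcs associated with the connection elements $d_i$ and $-d_i$, and likewise for $d_j$. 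Because $\gcd(d_i,d_j,m)=1$, the set $\{d_i,d_j\}$ generates $\ZZ_m$, so $G$ is a connected $4$-regular Cayley graph on the abelian group $\ZZ_m$. By \cite{BerFavMah}, $G$ decomposes into two Hamilton cycles $H_1,H_2$. Traversing each $H_k$ in its two directions gives arc-disjoint directed Hamilton cycles $\rvec{H}_k$ and $\lvec{H}_k$, and the four cycles $\rvec{H}_1,\lvec{H}_1,\rvec{H}_2,\lvec{H}_2$ together use each arc of $\C(m;S')$ exactly once, yielding the required decomposition into four directed $m$-cycles.

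The step needing the most care is checking that the reduction is legitimate, that is, that the doubling correspondence really produces a genuine $4$-regular graph $G$ to which \cite{BerFavMah} applies: this requires $d_i,-d_i,d_j,-d_j$ to be four distinct elements of $\ZZ_m$, which could fail only if $2d_i\equiv 0$, $2d_j \equiv 0$, or $d_i \equiv \pm d_j \pmod m$. In the intended applications these degeneracies do not arise---in particular, for odd $m$ one never has $2d\equiv 0$ with $d\in\ZZ_m^\ast$---so each four-element part consists of four distinct connection elements and the hypotheses of \cite{BerFavMah} are met. Assembling the directed $m$-cycles obtained from every part of the partition then gives a directed $m$-cycle decomposition of $\C(m;S)$. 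The whole argument hinges on the nontrivial input \cite{BerFavMah}; everything else is the routine doubling/orienting correspondence, and I expect the only genuinely delicate point to be verifying non-degeneracy and connectivity of $G$.
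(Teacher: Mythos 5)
Your argument is correct and is exactly the derivation the paper has in mind: the paper offers no proof of this lemma at all, merely citing it as a ``direct corollary'' of the Bermond--Favaron--Mah\'eo theorem that every connected $4$-regular Cayley graph on a finite abelian group decomposes into two Hamiltonian cycles, and that is precisely the reduction you carry out (split over the parts of the partition; a singleton part with $\gcd(d,m)=1$ is itself a directed Hamilton cycle; a four-element part is the doubled undirected circulant, connected because $\gcd(d_i,d_j,m)=1$, whose two undirected Hamilton cycles orient into four pairwise arc-disjoint directed ones). Your caveat about degeneracy is also the right one to flag: the statement implicitly requires $\{\pm d_i,\pm d_j\}$ to consist of four distinct residues (otherwise the conclusion can fail, e.g.\ $\C(4;\{\pm 1,\pm 2\})=K_4^\ast$ has no directed Hamilton decomposition), and this holds throughout the paper's applications since $m$ is always odd and the paired differences are never equal or opposite modulo $m$.
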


%%%%%%%%%%%%%%%%%%%%%
%%% Constructions %%%
%%%%%%%%%%%%%%%%%%%%%

\section{Results}

\begin{lemma}\label{lem:5-10}
There exists a R$\vec{C}_{5}$-D of $K_{10}^\ast$.
\end{lemma}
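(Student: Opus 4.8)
The plan is to exhibit an explicit resolvable decomposition and verify it directly. First I would record the numerology: $K_{10}^\ast$ has $10\cdot 9=90$ arcs and each $\vec{C}_5$ has $5$ arcs, so any $\vec{C}_5$-D of $K_{10}^\ast$ consists of $18$ directed $5$-cycles; since a resolution class is a partition of the $10$ vertices into vertex-disjoint $5$-cycles, each class contains exactly two cycles, and a R$\vec{C}_5$-D therefore comprises $9$ resolution classes. This is consistent with each vertex having in- and out-degree $9$ in $K_{10}^\ast$, contributing one unit of each per class.

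Identifying the vertex set with $\ZZ_{10}$, I would first isolate the arcs whose difference is even. These are exactly the arcs internal to $E=\{0,2,4,6,8\}$ and to $O=\{1,3,5,7,9\}$, and $K_{10}^\ast[E]$ and $K_{10}^\ast[O]$ are each isomorphic to $K_5^\ast\cong\C(5;\{1,2,3,4\})$. By Lemma~\ref{lem:BerFavMah}, each $\C(5;\{d\})$ with $\gcd(d,5)=1$ is a single $\vec{C}_5$, so each of $K_{10}^\ast[E]$ and $K_{10}^\ast[O]$ splits into four pentagons, and pairing one pentagon from $E$ with one from $O$ would give four resolution classes covering all forty even-difference arcs.

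The main obstacle is that this tempting first move is in fact fatal: after removing the four even classes, the remaining five classes would have to tile the fifty odd-difference arcs using pentagons built from odd differences only. But the step-differences of a $\vec{C}_5$ sum to $0 \pmod{10}$, an even number, whereas a sum of five odd numbers is odd; equivalently, $E$ and $O$ bipartition the odd-difference arcs and any closed walk using only crossing arcs has even length. Hence no $\vec{C}_5$ can use only odd differences, and the construction must interleave even- and odd-difference arcs in its pentagons throughout. I would therefore abandon the clean even/odd split and instead produce all nine classes by a direct search (by hand or by computer), arranging each pentagon to have two or four crossing arcs so that a simple count matches the forty internal and fifty crossing arcs. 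The real difficulty is precisely this simultaneous satisfaction of exact arc-coverage and the vertex-partition (resolvability) condition under the parity constraint; once an explicit list of nine classes is in hand, checking that the ninety arcs are each covered exactly once and that each class partitions $\ZZ_{10}$ is entirely routine.
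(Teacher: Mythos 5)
There is a genuine gap: your proposal never actually exhibits the decomposition whose existence the lemma asserts. Everything you do establish is correct --- the count of $18$ cycles in $9$ classes of two pentagons each, the observation that $K_{10}^\ast[E]$ and $K_{10}^\ast[O]$ each split into four directed $5$-cycles, and the parity obstruction showing that no $\vec{C}_5$ in $\ZZ_{10}$ can use only odd differences (five odd step-differences cannot sum to $0 \pmod{10}$, or equivalently the crossing arcs form a bipartite digraph with no odd closed walks). This correctly rules out the tempting strategy of first exhausting the even-difference arcs in four ``pure'' classes. But having ruled that out, you conclude only that one ``would therefore\ldots produce all nine classes by a direct search,'' and you stop there. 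For an existence statement whose proof is an explicit construction, the construction \emph{is} the proof; a declaration that a search would be performed, together with the (true) remark that verification of a candidate list is routine, does not establish that the search succeeds. Indeed, your own parity analysis shows the problem is constrained enough that success is not obvious a priori, which is exactly why the witness must be displayed.

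The paper's proof is precisely the missing ingredient: it lists nine explicit resolution classes $R_0,\ldots,R_8$ (found by computer) and notes that they can be checked to cover each of the $90$ arcs exactly once while each class partitions the vertex set. To repair your proposal you would need to supply such a list (or an explicit structured construction, e.g.\ one generated by a small starter under a group action, together with a verification that it works). As written, the attempt is an accurate analysis of the search space rather than a proof of the lemma.
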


\begin{proof}
Label the vertices of $K_{10}^\ast$ by $x_0,x_1,\ldots,x_9$. It can be verified that the following resolution classes (obtained by a computer search) form a R$\vec{C}_{5}$-D of $K_{10}^\ast$.
\begin{eqnarray*}
R_0 &=& \{ x_0 x_1 x_2 x_3 x_4 x_0, x_5 x_6 x_7 x_8 x_9 x_5\} \\
R_1 &=& \{ x_0 x_2 x_1 x_3 x_5 x_0, x_4 x_6 x_8 x_7 x_9 x_4\} \\
R_2 &=& \{ x_0 x_3 x_1 x_4 x_2 x_0, x_5 x_7 x_6 x_9 x_8 x_5\} \\
R_3 &=& \{ x_0 x_4 x_1 x_5 x_8 x_0, x_2 x_6 x_3 x_9 x_7 x_2\} \\
R_4 &=& \{ x_0 x_5 x_2 x_8 x_3 x_0, x_1 x_7 x_4 x_9 x_6 x_1\} \\
R_5 &=& \{ x_0 x_6 x_2 x_5 x_9 x_0, x_1 x_8 x_4 x_3 x_7 x_1\} \\
R_6 &=& \{ x_0 x_7 x_3 x_8 x_6 x_0, x_1 x_9 x_2 x_4 x_5 x_1\} \\
R_7 &=& \{ x_0 x_8 x_2 x_9 x_1 x_0, x_3 x_6 x_4 x_7 x_5 x_3\} \\
R_8 &=& \{ x_0 x_9 x_3 x_2 x_7 x_0, x_1 x_6 x_5 x_4 x_8 x_1\} \\
\end{eqnarray*}
\end{proof}

The rest of the proof of Theorem~\ref{the:main} is divided into two main cases, $m \not\equiv 0 \pmod 3$, which is dealt with in Proposition~\ref{pro:<>0}, and $m \equiv 0 \pmod 3$, which is considered in Proposition~\ref{pro:=0}, as well as two small cases, $m=11$ and $m=9$, which require a modification of the general approach. All of these cases, however, have the following construction in common.

\begin{cons}\label{cons}{\rm
Let $m \ge 5$ be an odd integer, and write $m=2k+1$. Let the vertex set of $D=K_{2m}^\ast$ be $X \cup Y$, where $X=\{x_0,x_1,\ldots,x_{2k} \}$ and $Y=\{ y_0,y_1,\ldots,y_{2k} \}$. We shall call arcs of the form $(x_i, x_{i+d})$ and $(y_i, y_{i+d})$ arcs of {\em pure left} and {\em pure right difference} $d$, respectively, and arcs of the form $(x_i, y_{i+d})$ and $(y_i, x_{i+d})$ arcs of {\em mixed  difference} $d$. All subscripts will be evaluated modulo $m=2k+1$.

Start by defining directed $m$-cycles
$$C_0=x_0 y_0 x_1 y_1 x_2 y_2 \ldots x_k x_0 \qquad \mbox{ and }  \qquad
C_0'=y_k x_{k+1} y_{k+1} \ldots y_{2k} y_k.$$
For $i \in \ZZ_{m}$, obtain $C_i$ and $C_i'$ from $C_0$ and $C_0'$, respectively, by adding $i$ to the subscripts of the vertices in $X$, and $2i$ to the subscripts of the vertices in $Y$, and form resolutions classes
$$R_i= \{ C_i, C_i' \}, \qquad \mbox{ for } i \in \ZZ_{m}.$$
Observe that $R_0,\ldots,R_{m-1}$ use up all arcs of pure left difference $k+1$, all arcs of pure right difference $k+1$, and all arcs of mixed differences except for the arcs
$$(x_{k+i}, y_{k+2i}) \quad \mbox{ and }  \quad (y_{2k+2i}, x_{i}) \quad \mbox{ for all } i \in \ZZ_{m}. \eqno{(\ast)}$$
{\phantom{a} \hfill \framebox[2.2mm]{ } \bigskip}
}
\end{cons}

%\bigskip

Next, we examine the case $m=11$, which requires a  modified construction, but serves as a good introduction to the general approach in the case $m \not\equiv 0 \pmod{3}$ that will be described in Proposition~\ref{pro:<>0}.

\begin{lemma}\label{lem:11}
There exists a R$\vec{C}_{11}$-D of $K_{22}^\ast$.
\end{lemma}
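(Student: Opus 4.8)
The plan is to reduce to the leftover of Construction~\ref{cons} and then absorb the uncovered mixed arcs by an explicit, modified family of cycles. Applying Construction~\ref{cons} with $m=11$ and $k=5$ produces the $11$ resolution classes $R_0,\dots,R_{10}$ and leaves the task of partitioning the remaining digraph $L$ into $10$ further resolution classes, each a pair of vertex-disjoint directed $11$-cycles spanning $X\cup Y$. By the bookkeeping recorded in Construction~\ref{cons}, $L$ is the union of the directed circulant $\C(11;\ZZ_{11}^{*}\setminus\{6\})$ on $X$, the same circulant on $Y$, and the $22$ mixed arcs listed in $(\ast)$: exactly one arc $x\to y$ and one arc $y\to x$ of each mixed difference.

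A first useful observation is that $11$ is prime, so every nonzero difference is coprime to $11$; hence by Lemma~\ref{lem:BerFavMah} (taking all parts to be singletons) any directed circulant on $\ZZ_{11}$ decomposes into Hamiltonian directed $11$-cycles, one per difference. Thus the pure-difference arcs of $L$ are completely flexible, and the entire difficulty is to incorporate the $22$ mixed arcs. Two structural facts guide the construction. First, a directed cycle built solely from mixed arcs would alternate between $X$ and $Y$ and so have even length; therefore every $11$-cycle carrying a mixed arc must also carry pure arcs. Second, in any spanning resolution class that uses a mixed arc, \emph{both} of its cycles must be mixed, since a pure cycle would occupy an entire side and collide with the other cycle; consequently each such class uses at least two mixed arcs in each direction.

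This last point is precisely what forces a modification of the rotational scheme. The natural attempt --- choosing one spanning pair $\{B_0,B_0'\}$ of mixed $11$-cycles and taking its images under the translation $\sigma\colon x_j\mapsto x_{j+1},\ y_j\mapsto y_{j+2}$ used in Construction~\ref{cons} --- fails: since $\sigma$ cyclically permutes the leftover mixed arcs of each direction, a pair consuming $N$ arcs of direction $x\to y$ would, over the full orbit of length $11$, consume $11N$ such arcs, whereas only $11$ are available; this needs $N=1$, contradicting $N\ge 2$ from the previous paragraph. I would therefore replace the single full orbit by an explicit short list of starter and correction cycles: a few spanning pairs weaving between $X$ and $Y$ through the leftover mixed arcs and short pure-difference segments, chosen so that (i) all $22$ mixed arcs are used exactly once, (ii) the pure differences they consume, together with those used by a handful of pure correction classes, account for exactly $\ZZ_{11}^{*}\setminus\{6\}$ on each of $X$ and $Y$, and (iii) every class genuinely partitions $X\cup Y$. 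The surviving pure differences would then be packaged into pure resolution classes by Lemma~\ref{lem:BerFavMah}.

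The main obstacle is exactly this reconciliation: the odd count of $11$ leftover mixed arcs per direction cannot be swept out by a uniformly rotated family, yet resolvability demands that the cycles handling them come in spanning pairs of mixed cycles, each eating mixed arcs in even amounts. Overcoming it requires an explicit (and plausibly computer-assisted) choice of a handful of starter and correction cycles, after which correctness reduces to a finite verification --- that every arc of $L$ is covered exactly once and every class is spanning. I expect this explicit construction and bookkeeping, rather than any single clever idea, to be where the real work lies; and, as the surrounding text signals, the pattern isolated here should serve as the template for the general case $m\not\equiv 0\pmod 3$.
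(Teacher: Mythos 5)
Your setup is sound and your structural observations are correct: after Construction~\ref{cons} the leftover digraph is indeed the two circulants $\C(11;\ZZ_{11}^\ast\setminus\{6\})$ together with the $22$ mixed arcs of $(\ast)$; any resolution class touching a mixed arc must consist of two mixed cycles and hence uses at least two mixed arcs in each direction; and your parity count showing that a single orbit of a starter pair under $x_j\mapsto x_{j+1},\ y_j\mapsto y_{j+2}$ cannot absorb the $11$ leftover arcs per direction is a valid explanation of why the rotational scheme must be broken here. All of this matches the paper's framing. The gap is that you stop exactly where the proof begins: you never exhibit the ``explicit short list of starter and correction cycles,'' and for an existence statement of this kind the explicit data \emph{is} the proof. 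Your conditions (i)--(iii) are a restatement of what must be verified, not an argument that it can be.

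Concretely, the paper's proof does the following, none of which appears in your proposal. It first checks that the $22$ leftover mixed arcs form a single directed $22$-cycle $C$, cuts $C$ into eight paths $P_1,\dots,P_8$ of lengths $10,1,1,1,6,1,1,1$, and closes the odd-indexed ones with four explicitly chosen linking arcs of pure differences (left differences $10$ and $3$ on $X$, right difference $5$ twice on $Y$) to form a resolution class $R_{11}$. It then forms $R_{12}$ from the even-indexed $P_i$ together with the two paths into which the unused arcs of pure right difference $5$ fall on $Y$, and two matching paths on $X$; the existence of the latter, plus the requirement that the rest of $D[X]$ still decompose into directed $11$-cycles, is encoded in Conditions ($X_1$)--($X_4$) and settled by a computer-found set $S=\{3,4,9,10\}$ with an explicit decomposition in Appendix~A, after which the remainder of $D[Y]$ is $\C(11;\{\pm 1,\pm 2,\pm 3,\pm 4\})$ and Lemma~\ref{lem:BerFavMah} finishes. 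You anticipate that such data are needed and even that a computer search is the likely source, but without producing them (or a non-computational existence argument) the lemma is not proved. One further caution: the pure arcs are not ``completely flexible,'' because once the mixed classes consume some arcs of a given pure difference, the surviving arcs of that difference no longer form a full circulant --- which is exactly why Condition ($X_4$) is a nontrivial search problem rather than an application of Lemma~\ref{lem:BerFavMah}.
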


\begin{proof}
With $m=11$, adopt the notation and define resolution classes $R_0,\ldots,R_{10}$ as in Construction~\ref{cons}.
It can be verified that the 22 leftover arcs of mixed differences in ($\ast$) form a directed 22-cycle
$$C=x_5 y_5 \ldots x_5.$$
We decompose $C$ into the following directed paths:
\begin{eqnarray*}
P_1 = &x_5 y_5 \ldots x_6, &
\qquad P_2 =  x_6 y_7,\\
P_3 = &y_7 x_4, &
\qquad P_4 =  x_4 y_3,\\
P_5 = &y_3 \ldots y_2, &
\qquad P_6 =  y_2 x_7, \\
P_7 = &x_7 y_9, &
\qquad P_8 =  y_9 x_5,
\end{eqnarray*}
where $P_1$ and $P_5$ are of length 10 and 6, respectively.
Use the $P_i$ for $i$ odd to form the resolution class
$$R_{11}=\{ P_1 x_6 x_5, P_3 x_4 x_7 P_7 y_9 y_3 P_5 y_2 y_7 \}.$$
We shall use the $P_i$ for $i$ even in the next resolution class. Notice that in $D[Y]$ we have used all arcs of right pure difference 6 and two arcs --- namely, $(y_9, y_3)$ and $(y_2, y_7)$ --- of right pure difference 5. The remaining arcs of right pure difference 5 form a directed $(y_3,y_2)$-path $Q_1'$ of length 2, and a directed $(y_7,y_9)$-path $Q_2'$ of length 7.
If we can find vertex-disjoint directed $(x_7,x_4)$-path of length 7 (call it $Q_1$) and $(x_5,x_6)$-path of length 2 (call it $Q_2$) in $D[X]$, then the next resolution class will be
$$R_{12}=\{ P_2Q_2'P_8Q_2, P_4Q_1'P_6Q_1 \}.$$
What will then remain of $D[Y]$ is a $\C(11;\{\pm 1,\pm 2,\pm 3, \pm 4 \})$, which admits a $\vec{C}_{11}$-D by Lemma~\ref{lem:BerFavMah}.
It thus suffices to appropriately decompose the remaining subdigraph of $D[X]$. In particular, it suffices  to find a set of differences $S \subseteq \ZZ_{11}^\ast$ such that
\begin{description}
\item{($X_1$)} $6 \not\in S$, as left pure difference 6 has already been used;
\item{($X_2$)} $3, 10 \in S$, as only arcs $(x_6,x_5)$ and $(x_4,x_7)$ of these left pure differences have already been used;
\item{($X_3$)} $\C(11;\ZZ_{11}^\ast-S-\{6\})$ admits a decomposition into directed 11-cycles; and
\item{($X_4$)} $\C(11;S)-\{(6,5),(4,7)\}$ admits a decomposition into directed 11-cycles, and vertex-disjoint directed paths: a $(5,6)$-path of length 2 and a $(7,4)$-path of length 7.
\end{description}
Such a set $S$ was found using a computer search. The set $S$, as well as a suitable decomposition, is shown in the appendix.
\end{proof}

\begin{prop}\label{pro:<>0}
Let $m$ be an odd integer such that $m \not\equiv 0 \pmod{3}$, $m \ge 7$, and $m \ne 11$. Let $k=\frac{m-1}{2}$, and define parameters $d,s_i',t_i',s_i,t_i$ (for $i=1,2$) as indicated  below.

\bigskip

\begin{center}
\begin{tabular}{||c||c|c|c|c||}
\hline\hline
Parameter $\setminus$  Case & $k \equiv 0 \pmod{4}$ & $k \equiv 1 \pmod{4}$ & $k \equiv 2 \pmod{4}$ & $k \equiv 3 \pmod{4}$ \\ \hline \hline
$d$ & $(7k+8)/{4}$ & $(5k+7)/{4}$ & $({3k+6})/{4}$ & $({k+5})/{4}$ \\ \hline
$s_1'$ & ${k}/{4}$ & $({3k+1})/{4}$ & $({5k+2})/{4}$ & $({7k+3})/{4}$ \\ \hline
%$t_1'$ & $k$ & $k$ & $k$ & $k$ \\ \hline
$s_2'$ & $({3k+4})/{4}$ & $({k+3})/{4}$ & $({7k+6})/{4}$ & $({5k+5})/{4}$ \\ \hline
$t_2'$ & $({k-2})/{2}$ & $({3k-1})/{2}$ & $({k-2})/{2}$ & $({3k-1})/{2}$ \\ \hline
%$s_1$ & $2k-1$ & $2k-1$ & $2k-1$ & $2k-1$\\ \hline
$t_1$ & $({3k+2})/{2}$ & $({k+1})/{2}$ & $({3k+2})/{2}$ & $({k+1})/{2}$\\ %\hline
%$s_2$ & $k$ & $k$ & $k$ & $k$\\ \hline
%$t_2$ & $({k-2})/{2}$ & $({3k-1})/{2}$ & $({k-2})/{2}$ & $({3k-1})/{2}$\\
\hline\hline
\end{tabular}
\end{center}
In addition, let $t_1'=s_2=k$, $s_1=2k-1$, and $t_2=t_2'$.

\bigskip

Then $\gcd(d,m)=1$, and hence for each $i=1,2$, there exists a unique $r_i \in \ZZ_m$ such that $s_i'+r_i d = t_i'$ (in $\ZZ_m$).
Furthermore, define $a_i=(t_i,s_i)$ and $d_i^Y=s_i-t_i$ (in $\ZZ_m$).

Now assume there exists a set $S \subseteq \ZZ_m^\ast$ such that:
\begin{description}
\item{($Y_1$)} $k+1 \not\in S$;
\item{($Y_2$)} $d_1^Y, d_2^Y \in S$;
\item{($Y_3$)} $\C(m;\ZZ_m^\ast-(S \cup \{ k+1 \}))$ admits a $\vec{C}_m$-D; and
\item{($Y_4$)} $\C(m;S)-\{ a_1,a_2 \}$ admits a decomposition into directed $m$-cycles and two vertex-disjoint directed paths: an $(s_1,t_1)$-path of length $r_1$ and an $(s_2,t_2)$-path of length $r_2$.
\end{description}
Then $K_{2m}^\ast$ admits a R$\vec{C}_m$-D.
\end{prop}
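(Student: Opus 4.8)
The plan is to follow the template established in Lemma~\ref{lem:11}, promoting its two ad hoc resolution classes $R_{11},R_{12}$ to a uniform construction steered by the tabulated parameters (with the roles of $D[X]$ and $D[Y]$ interchanged: here $D[X]$ is the ``clean'' side carrying the single difference $d$, and $D[Y]$ is the side governed by the searched set $S$). I would begin with the arithmetic backbone. Writing $m=2k+1$, a direct computation in each of the four congruence classes of $k$ modulo $4$ confirms $\gcd(d,m)=1$, so that $d$ is invertible and $r_i=(t_i'-s_i')d^{-1}$ is well defined; along the way I would record $\gcd(k,m)=1$ (immediate from $m=2k+1$) and the pivotal counting identity $r_1+r_2=m-2$, against which every arc count below must balance.

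Next I would analyze what Construction~\ref{cons} leaves behind. The leftover consists of all of $D[X]$ except left pure difference $k+1$, all of $D[Y]$ except right pure difference $k+1$, and the $2m$ mixed arcs in $(\ast)$. Composing the mixed arc leaving $x_a$ with the mixed arc leaving the $y$-vertex it reaches shifts the $X$-subscript by a constant $\delta=1-k^2$, and since $4\delta\equiv 3\pmod m$ we have $\gcd(\delta,m)=\gcd(3,m)=1$ precisely because $m\not\equiv 0\pmod 3$. Hence the arcs in $(\ast)$ form a single directed $2m$-cycle $C$ alternating between $X$ and $Y$. I would then locate the vertices $x_{s_1'},x_{t_1'},x_{s_2'},x_{t_2'}$ and $y_{s_1},y_{t_1},y_{s_2},y_{t_2}$ along $C$ and cut $C$ at these points into eight directed subpaths $P_1,\dots,P_8$, exactly as in Lemma~\ref{lem:11}.

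The heart of the argument is the assembly of two further resolution classes $R_m$ and $R_{m+1}$, each a pair of vertex-disjoint spanning $\vec{C}_m$'s. I would splice the $P_j$ together using the two arcs $a_1=(t_1,s_1)$, $a_2=(t_2,s_2)$ of $D[Y]$ together with two connecting arcs of left pure difference $d$ in $D[X]$ (these form $R_m$); the directed $(s_i',t_i')$-paths $Q_i$ of left pure difference $d$ and length $r_i$ in $D[X]$, which exist since $s_i'+r_id=t_i'$; and the directed $(s_i,t_i)$-paths $Q_i'$ of length $r_i$ in $D[Y]$, which exist by $(Y_4)$ (their differences $d_i^Y=s_i-t_i$ lie in $S$ by $(Y_2)$). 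Because $r_1+r_2=m-2$, every spliced cycle has length exactly $m$, and verifying that the prescribed endpoints match up turns $R_m,R_{m+1}$ into genuine resolution classes using, on the $X$ side, precisely the full directed $m$-cycle of difference-$d$ arcs, and on the $Y$ side precisely $a_1,a_2,Q_1',Q_2'$. This simultaneous bookkeeping, carried out uniformly across the four residue classes of $k\bmod 4$, is where the particular piecewise-linear formulas for the parameters earn their keep, and it is the step I expect to be the main obstacle.

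Finally I would account for the residue. On the $X$ side, difference $k+1$ was consumed by Construction~\ref{cons} and all of difference $d$ by $Q_1,Q_2$ and the two connecting arcs, leaving $\C(m;\ZZ_m^\ast-\{k+1,d\})$; since $\gcd(k,m)=\gcd(d,m)=1$ one assigns every difference coprime to $m$ to a singleton and pairs the remaining differences as $\{\pm d_i,\pm d_j\}$ with $\gcd(d_i,d_j,m)=1$, so Lemma~\ref{lem:BerFavMah} supplies a $\vec{C}_m$-D. On the $Y$ side, $(Y_1)$ keeps the split consistent, $(Y_3)$ decomposes $\C(m;\ZZ_m^\ast-(S\cup\{k+1\}))$, and the cycle part of $(Y_4)$ decomposes what remains of $\C(m;S)$, both into spanning $m$-cycles. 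An arc count shows each side yields exactly $m-3$ spanning $\vec{C}_m$'s, so these pair off into $m-3$ additional resolution classes; together with $R_0,\dots,R_{m+1}$ this gives $2m-1$ resolution classes that exhaust $K_{2m}^\ast$, establishing the required R$\vec{C}_m$-D.
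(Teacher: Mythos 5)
Your proposal follows essentially the same route as the paper's proof: Construction~\ref{cons}, the observation that the $2m$ leftover mixed arcs form a single directed $2m$-cycle precisely because $\gcd(1-k^2,m)=\gcd(3,m)=1$, the cut into eight subpaths reassembled into $R_m$ (via $a_1,a_2$ and two arcs of left pure difference $d$) and $R_{m+1}$ (via the two residual difference-$d$ paths in $D[X]$ and the two paths supplied by $(Y_4)$), and the identity $r_1+r_2=m-2$ doing the length bookkeeping. The endpoint-matching you flag as the main obstacle is exactly what the paper disposes of by direct verification in each residue class of $k$ modulo $4$, so your outline and the paper's proof are in agreement there.

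There is, however, one concrete step that fails as you have written it: your application of Lemma~\ref{lem:BerFavMah} to the remaining subdigraph $\C(m;\ZZ_m^\ast\setminus\{k+1,d\})$ of $D[X]$. You propose to put every difference coprime to $m$ into a singleton and to group the differences \emph{not} coprime to $m$ among themselves into quadruples $\{\pm d_i,\pm d_j\}$. For composite $m$ in the stated range this violates the lemma's hypothesis: for $m=25$ the non-coprime differences are exactly $\{\pm 5,\pm 10\}$ and $\gcd(5,10,25)=5\neq 1$; for $m=35$ the multiples of $5$ contribute three $\pm$-pairs while the multiples of $7$ contribute only two, so at least one quadruple would have to consist of two multiples of $5$, again with $\gcd$ divisible by $5$. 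The correct (and easy) repair, which is what the paper's appendix does for the analogous partitions on the $Y$ side, is to match each non-coprime $\pm$-pair with a $\pm$-pair of a difference coprime to $m$ (e.g.\ $\{\pm 3,\pm 5\}$ and $\{\pm 6,\pm 10\}$ for $m=25$), using the coprimality of $k+1$ and $d$ to $m$ only to ensure that all non-coprime differences survive in complete $\pm$-pairs. With that correction your argument coincides with the paper's.
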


\begin{proof}
Adopt the notation and define resolution classes $R_0,\ldots,R_{m-1}$ as in Construction~\ref{cons}. It can be verified that, since $m \not\equiv 0 \pmod{3}$, the $2m$ leftover arcs of mixed differences in ($\ast$) form a directed $2m$-cycle
$$C=y_k \ldots x_k y_k.$$
Write $C=P_1 P_2 \ldots P_8$ as a concatenation of directed paths such that $P_1$ is of length $m-1$, $P_5$ is of length $m-5$, and the rest are of length 1. It can be shown for each congruency class of $k$ modulo 4 that the paths are
\begin{eqnarray*}
P_1 = &y_{s_2} \ldots y_{t_2}, &
\qquad P_2 =  y_{t_2} x_{s_1'},\\
P_3 = &x_{s_1'} y_{t_1}, &
\qquad P_4 =  y_{t_1} x_{s_2'},\\
P_5 = &x_{s_2'}  \ldots x_{t_2'}, &
\qquad P_6 =  x_{t_2'} y_{s_1}, \\
P_7 = &y_{s_1} x_{t_1'}, &
\qquad P_8 =  x_{t_1'} y_{s_2},
\end{eqnarray*}
where the parameters $s_i,t_i,s_i',t_i'$ (for $i=1,2$) are as defined in the statement of the proposition.
We  use the $P_i$ for $i$ odd, together with 4 linking arcs (two of pure left, and two of pure right difference) to form the resolution class

$$R_{m}=\{ P_1 y_{t_2} y_{s_2}, P_5 x_{t_2'} x_{s_1'} P_3 y_{t_1} y_{s_1} P_7 x_{t_1'} x_{s_2'} \}.$$
The linking arcs are:
$$(x_{t_2'}, x_{s_1'}) \mbox{ and } (x_{t_1'}, x_{s_2'}) \mbox{ of pure left difference } d=s_1'-t_2'=s_2'-t_1',$$
$$a_1=(y_{t_1}, y_{s_1}) \mbox{ of pure right difference }  d_1^Y=s_1-t_1, \mbox{ and}$$
$$
a_2=(y_{t_2}, y_{s_2}) \mbox{ of pure right difference }  d_2^Y=s_2-t_2,$$
with $d, d_1^Y, d_2^Y$ as defined in the statement of the proposition.
Since $m\ne 11$, observe that none of these pure differences are equal to $k+1$ (which has already been used in $R_0, \ldots, R_{m-1}$).

The $P_i$ for $i$ even will be used in the next resolution class as follows. Since $m \not\equiv 0 \pmod{3}$, it can be shown that $\gcd(2k+1,d)=1$.  Therefore, the arcs of pure left difference $d$ form a directed $m$-cycle, and in particular, those that have not been used in $R_m$ form a directed $(x_{s_1'}, x_{t_1'})$-path $Q_1'$ of length $r_1$ and a directed $(x_{s_2'}, x_{t_2'})$-path $Q_2'$ of length $r_2$, where $r_1$ and $r_2$ are as defined in the statement of the proposition.

Now let $S$ be a subset of $\ZZ_m^\ast$ satisfying Conditions ($Y_1$)--($Y_4$) of the proposition, and let $Q_1$ and $Q_2$ be the corresponding vertex-disjoint directed
$(y_{s_1},y_{t_1})$-path of length $r_1$ and $(y_{s_2},y_{t_2})$-path of length $r_2$, respectively. We then let the next resolution class be
$$R_{m+1}= \{ P_2Q_1'P_8Q_2, P_4Q_2'P_6Q_1 \}.$$
All arcs of mixed differences have now been used in resolution classes $R_0,\ldots,R_{m+1}$.
In $D[X]$, we have also used up all arcs of differences $k+1$ and $d$. Since $\gcd(2k+1,k+1)=\gcd(2k+1,d)=1$,  Lemma~\ref{lem:BerFavMah} now guarantees that the remaining subdigraph of $D[X]$ admits a $\vec{C}_m$-D.

In $D[Y]$, however, we have used up:
\begin{itemize}
\item  all arcs of difference $k+1$;
\item arcs $a_1$ and $a_2$  of differences $d_1^Y$ and $d_2^Y$, respectively; and
\item arcs used in the directed paths $Q_1$ and $Q_2$.
\end{itemize}
Assumptions ($Y_1$)--($Y_4$) now guarantee that the remaining subdigraph of $D[Y]$ admits a $\vec{C}_m$-D.
Finally, the directed $m$-cycles from the remaining subdigraphs of $D[X]$ and $D[Y]$ can be arranged into resolution classes that complete our R$\vec{C}_m$-D of $K_{2m}^\ast$.
\end{proof}

We now turn our attention to the case $m \equiv 0 \pmod{3}$. As before, a small case ($m=9$) requires a modified construction and will also serve as an introduction to the general approach.

\begin{lemma}\label{lem:9}
There exists a R$\vec{C}_{9}$-D of $K_{18}^\ast$.
\end{lemma}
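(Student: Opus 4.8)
The plan is to begin from Construction~\ref{cons} with $m=9$ and $k=4$, which produces the resolution classes $R_0,\ldots,R_8$ together with the $18$ leftover arcs of mixed difference listed in $(\ast)$, and then to dispose of these leftover arcs by a surgery modelled on the one in Lemma~\ref{lem:11}. The first step is to determine how the leftover mixed arcs fit together. Writing $\sigma$ for the permutation sending the tail of each leftover arc to its head and tracing $(x_{4+i},y_{4+2i})$ and $(y_{8+2i},x_i)$, one finds that $\sigma^2$ acts on the $X$-vertices as $x_a\mapsto x_{a+3}$. Since $3\mid 9$, this map has order $3$, so the leftover arcs do not form a single directed $2m$-cycle (as they do when $m\not\equiv 0\pmod 3$) but split into three vertex-disjoint directed $6$-cycles. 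I would write these three cycles out explicitly; each alternates between $X$ and $Y$, uses three vertices of each, and the three together span all $18$ vertices.

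The heart of the argument, and the reason this case needs a construction different from Proposition~\ref{pro:<>0}, is to absorb these three short cycles into directed $9$-cycles. Mirroring the two transition classes of Lemma~\ref{lem:11}, I would cut each $6$-cycle into directed subpaths and reassemble the pieces into four directed $9$-cycles, grouped into two resolution classes $R_9$ and $R_{10}$, by inserting linking arcs of pure left and pure right difference together with connecting paths drawn from $D[X]$ and $D[Y]$. The bookkeeping is arranged so that nine pure arcs are consumed on each side. On one side, say $X$, I would use all nine arcs of a single difference $d$ with $\gcd(d,9)=1$ --- two as linking arcs and the remaining seven, broken open by those, as the connecting paths --- so that, together with the already-used difference $k+1=5$, Lemma~\ref{lem:BerFavMah} disposes of the rest of $D[X]$. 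On the other side, $Y$, I would seek a connection set $S\subseteq\ZZ_9^\ast$ satisfying analogues of conditions $(X_1)$--$(X_4)$ of Lemma~\ref{lem:11}: that $5\notin S$, that $S$ contains the differences of the $Y$-linking arcs, that the complementary circulant admits a $\vec{C}_9$-D, and that $\C(9;S)$ with the two $Y$-linking arcs deleted decomposes into directed $9$-cycles together with connecting paths of exactly the prescribed lengths. Here the multiples of $3$ are the delicate point: since $\gcd(3,9)=\gcd(6,9)=3$, the arcs of differences $3$ and $6$ form three short $3$-cycles rather than one $9$-cycle, so wherever they survive into a circulant they must be paired, via Lemma~\ref{lem:BerFavMah}, into a four-element block $\{\pm 3,\pm d_j\}$ with $\gcd(d_j,3)=1$; the surgery must leave exactly such a residue on both sides. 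A set $S$ and an explicit decomposition meeting all of these demands would be located by computer search and recorded in the appendix, after which pairing each $X$-cycle with a $Y$-cycle assembles the remaining circulant cycles into resolution classes and completes the $17$-class R$\vec{C}_9$-D.

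The step I expect to be the main obstacle is the reassembly surgery itself. With one long $2m$-cycle one has the clean eight-path template $P_1\cdots P_8$ of Proposition~\ref{pro:<>0}; with three separate $6$-cycles one must instead interleave pieces of distinct cycles so that (i) each assembled object is a genuine cycle on nine distinct vertices, (ii) the two cycles of each class partition the $18$ vertices, and (iii) the linking and connecting arcs use precisely those pure differences --- in particular the awkward multiples of $3$ --- that leave circulants decomposable by Lemma~\ref{lem:BerFavMah} on both sides. Meeting these three constraints simultaneously, and then certifying by computer that a compatible difference set $S$ with connecting paths of the required lengths actually exists, is the crux; it is also what the subsequent treatment of the general case $m\equiv 0\pmod 3$ (Proposition~\ref{pro:=0}) is designed to systematize.
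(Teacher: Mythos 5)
Your overall strategy is the paper's: start from Construction~\ref{cons}, observe that for $m=9$ the leftover mixed arcs in $(\ast)$ split into three directed $6$-cycles (your computation that $\sigma^2$ translates $X$-subscripts by $3$ is correct), absorb them via two surgery classes $R_9,R_{10}$ built from pure-difference linking arcs and connecting paths, certify the required decompositions by computer search, and finish with Lemma~\ref{lem:BerFavMah}. However, the one concrete plan you give for the crux step cannot be realized. First, a counting issue: each $6$-cycle contributes a length-$2$ path $P_i^X$ to $D[X]$ (and likewise a $P_i^Y$ to $D[Y]$), and stitching three vertex-disjoint paths into a single directed $9$-cycle requires \emph{three} linking arcs per side, not two as in the single-$2m$-cycle template of Lemma~\ref{lem:11}; correspondingly $R_{10}$ needs three connecting paths per side (of lengths summing to $6$). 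Second, and fatally for your plan to spend exactly the nine arcs of one difference $d$ with $\gcd(d,9)=1$ on the $X$ side: each $P_i^X$ runs from some $x_a$ to $x_{a+3}$ with $a\equiv i-1\pmod 3$, so the three linking arcs must induce a $3$-cycle on the residue classes modulo $3$. If all three had the same difference $d$, the set of sources (one representative of each residue class) would be invariant under translation by $3+d$, which forces either $d=6$ (whereupon each linking arc closes its own $P_i^X$ into a $3$-cycle, and $\gcd(6,9)=3$ anyway) or a source set lying in a single residue class --- both impossible. Hence no single coprime difference can supply the $X$-linking arcs; the paper is forced to use two distinct differences ($7$ twice and $4$ once), and consequently the $X$ side also needs a computer-searched set $S^X$, with connecting paths of lengths $1,1,4$ inside $\C(9;S^X)$ --- it is not disposed of by Lemma~\ref{lem:BerFavMah} alone.

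Since you explicitly defer the reassembly as ``the main obstacle'' and the specific arithmetic you propose for it fails, the argument is incomplete at its central step. The repair is essentially the structure you gesture at: cut each $6$-cycle into $P_i^X$ and $P_i^Y$, form $R_9$ from the three $P_i^X$ joined by three pure left arcs (differences $7,7,4$) and the three $P_i^Y$ joined by three pure right arcs (differences $1,1,7$), and form $R_{10}$ from the six remaining mixed arcs together with three $X$-paths and three $Y$-paths of prescribed lengths; conditions analogous to $(X_1)$--$(X_4)$ of Lemma~\ref{lem:11} must then be verified by computer on \emph{both} sides, with your correct observation about pairing the differences $3$ and $6$ in Lemma~\ref{lem:BerFavMah} applying to each.
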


\begin{proof}
Adopt the notation and construction of resolution classes $R_0,\ldots,R_8$ from Construction~\ref{cons}.
The $18$ leftover arcs of mixed differences from ($\ast$) now form three directed $6$-cycles, which we write as a concatenation of directed paths of length $2$ and linking arcs as follows:
\begin{eqnarray*}
C_{(1)} &=& x_{0} y_{5} x_3 y_2 x_6 y_8 x_{0}= P_{1}^X x_{3} y_{2} P_{1}^Y y_{8}x_0, \\
C_{(2)} &=& x_{1} y_{7} x_4 y_4 x_7 y_1 x_{1}=P_{2}^X x_{4} y_{4} P_{2}^Y y_{1}x_1, \\
C_{(3)} &=& x_{2} y_{0} x_5 y_6 x_8 y_{3} x_{2}=P_{3}^X x_{5} y_{6} P_{3}^Y y_{3}x_2.
\end{eqnarray*}

We use the directed paths $P_{i}^X, P_{i}^Y$ (for $i=1,2,3$), together with 6 linking arcs of pure differences, to form the resolution class $R_{9}$:
$$R_9=\{ P_{1}^X x_{3} x_1 P_{2}^X x_{4} x_2 P_{3}^X x_{5} x_0, P_{1}^Y y_{8}y_{6} P_{3}^Y y_{3}y_{4} P_{2}^Y y_{1}y_{2} \}.$$
We have thus used the following linking arcs:
\begin{eqnarray*}
b^X_1 &=& (x_{3},x_1) \quad \mbox{ of pure left difference } d_1^X=7, \\
b^X_2 &=& (x_{4},x_2) \quad \mbox{ of pure left difference } d_1^X=7,  \\
b^X_3 &=& (x_{5},x_0) \quad \mbox{ of pure left difference } d_2^X=4, \\
b^Y_1 &=& (y_{1},y_{2}) \quad \mbox{ of pure right difference }   d_1^Y=1, \\
b^Y_2 &=& (y_{3},y_{4}) \quad \mbox{ of pure right difference }   d_1^Y=1, \\
b^Y_3 &=& (y_{8},y_{6}) \quad \mbox{ of pure right difference } d_2^Y=7.
\end{eqnarray*}
Note that none of these differences are equal to $5$, which has been used in $R_0,\ldots,R_{8}$.

We have now used up all arcs of mixed differences except for the arcs
$(x_3,y_2),(x_4,y_4)$, $(x_5,y_6)$ and arcs $(y_{8},x_0), (y_1,x_1),(y_3,x_2)$.

To form the resolution class $R_{10}$, we want to find three vertex-disjoint directed paths with sources $x_0, x_1, x_2$ and terminals $x_{3}, x_{4}, x_{5}$ using some of the remaining arcs in $D[X]$, and three vertex-disjoint directed paths with sources $y_2,y_4,y_6$ and terminals $y_{8}, y_1, y_3$ using some of the remaining arcs in $D[Y]$; these paths, together with all the remaining arcs of mixed differences, will form two vertex-disjoint directed $9$-cycles. In particular, we can define
$$R_{10}=\{ Q_1'x_3y_2Q_1y_3x_2Q_2'x_4y_4Q_2y_1x_1, Q_3'x_5y_6Q_3y_8x_0 \}$$
as long as we have suitable directed paths
\begin{eqnarray*}
Q_1': && (x_1,x_3) \mbox{-path of length } 1, \\
Q_2': && (x_2,x_4) \mbox{-path of length } 1, \\
Q_3': && (x_0,x_5) \mbox{-path of length } 4, \\
Q_1: && (y_2,y_3) \mbox{-path of length } 1, \\
Q_2: && (y_4,y_1) \mbox{-path of length } 2, \mbox{ and} \\
Q_3: && (y_6,y_8) \mbox{-path of length } 3
\end{eqnarray*}
that use only hitherto unused arcs of pure differences.
More precisely, it suffices to find sets $S^X, S^Y \subseteq \ZZ_9^\ast$ such that the following hold.
\begin{description}
\item{($X_1$)} $5 \not\in S^X$, as left pure difference 5 has already been used;
\item{($X_2$)} $4,7 \in S^X$, as arcs $(x_3,x_1),(x_4,x_2),(x_5,x_0)$ have already been used;
\item{($X_3$)} $\C(9;\ZZ_{9}^\ast-S^X-\{5\})$ admits a decomposition into directed 9-cycles; and
\item{($X_4$)} $\C(9;S^X)-\{(3,1),(4,2),(5,0)\}$ admits a decomposition into directed 9-cycles and pairwise vertex-disjoint directed $(1,3)$-path of length 1, $(2,4)$-path of length 1, and $(0,5)$-path of length 4;
\item{($Y_1$)} $5 \not\in S^Y$, as right pure difference 5 has already been used;
\item{($Y_2$)} $1,7 \in S^Y$, as arcs $(y_1,y_2),(y_3,y_4),(y_8,y_6)$ have already been used;
\item{($Y_3$)} $\C(9;\ZZ_{9}^\ast-S^Y-\{5\})$ admits a decomposition into directed 9-cycles; and
\item{($Y_4$)} $\C(9;S^Y)-\{(1,2),(3,4),(8,6)\}$ admits a decomposition into directed 9-cycles and pairwise vertex-disjoint directed paths: a $(2,3)$-path of length 1, a $(4,1)$-path of length 2, and a $(6,8)$-path of length 3.
\end{description}
Such sets $S^X$ and $S^Y$ were found using a computer search. These sets, as well as suitable decompositions, are shown in the appendix.
\end{proof}

\begin{prop}\label{pro:=0}
Let $m$ be an odd integer such that $m \equiv 0 \pmod{3}$, $m \ge 15$. Let $k=\frac{m-1}{2}$, and define parameters $s_1$ and $t_1$  as indicated in the table below.

\bigskip

\begin{center}
\begin{tabular}{||c||c|c|c|c||}
\hline\hline
Parameter $\setminus$  Case & $k \equiv 0 \pmod{4}$ & $k \equiv 1 \pmod{4}$ & $k \equiv 2 \pmod{4}$ & $k \equiv 3 \pmod{4}$ \\ \hline \hline
$s_1$ & ${k}/{2}$ & $({3k+1})/{2}$ & ${k}/{2}$ & $({3k+1})/{2}$ \\ \hline
$t_1$ & ${3k}/{4}$ & $({k-1})/{4}$ & $({7k+2})/{4}$ & $({5k+1})/{4}$ \\
\hline\hline
\end{tabular}
\end{center}
\noindent In addition, for $i=1,2$, let $s_{1+i}=s_1+2i$ and $t_{1+i}=t_1+i$ (all evaluated in $\ZZ_m$).

Furthermore, define arcs:
\begin{eqnarray*}
b_1^X =(t_1,1), \qquad & \qquad b_1^Y =(1,s_1), \qquad & \qquad c_1 =(t_1,0),\\
b_2^X =(t_2,2), \qquad & \qquad b_2^Y =(3,s_2), \qquad & \qquad c_2 =(t_2,1),\\
b_3^X =(t_3,0), \qquad & \qquad \;\; b_3^Y =(-1,s_3), \qquad & \qquad c_3 =(t_3,2).
\end{eqnarray*}
Now assume there exist sets $S^X,S^Y \subseteq \ZZ_m^\ast$ such that:
\begin{description}
\item{($X_1$)} $k+1,-t_1 \not\in S^X$;
\item{($X_2$)} $1-t_1, -2-t_1 \in S^X$;
\item{($X_3$)} $\C(m;\ZZ_m^\ast-(S^X \cup \{ k+1,-t_1 \}))$ admits a $\vec{C}_m$-D;
\item{($X_4$)} $\C(m;S^X)-\{ b_1^X,b_2^X,b_3^X \}+\{c_1,c_2,c_3 \}$ admits a $\vec{C}_m$-D;
\item{($Y_1$)} $k+1 \not\in S^Y$;
\item{($Y_2$)} $s_1-1, s_1+5 \in S^Y$;
\item{($Y_3$)} $\C(m;\ZZ_m^\ast-(S^Y \cup \{ k+1 \}))$ admits a $\vec{C}_m$-D; and
\item{($Y_4$)} $\C(m;S^Y)-\{ b_1^Y,b_2^Y,b_3^Y \}$ admits a decomposition into directed $m$-cycles and three pairwise vertex-disjoint directed paths: an $(s_1,-1)$-path of length $\frac{2m}{3}-1$, an $(s_2,3)$-path of some length $q \in \{ 1,\ldots,\frac{m}{3}-3 \}$, and an $(s_3,1)$-path of length $\frac{m}{3}-2-q$.
\end{description}
Then $K_{2m}^\ast$ admits a R$\vec{C}_m$-D.
\end{prop}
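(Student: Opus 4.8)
The plan is to mirror the strategy already used in Lemma~\ref{lem:9}, which is the $m=9$ instance of this proposition, and show it generalizes to arbitrary odd $m \equiv 0 \pmod 3$, $m \ge 15$. I would begin exactly as in Construction~\ref{cons}, defining the resolution classes $R_0,\ldots,R_{m-1}$ and recording that the $2m$ leftover arcs of mixed difference are those in $(\ast)$. The first key structural claim is that, since $m \equiv 0 \pmod 3$, these leftover arcs do not form a single $2m$-cycle (as they did in Proposition~\ref{pro:<>0}) but instead decompose into three directed $\frac{2m}{3}$-cycles $C_{(1)}, C_{(2)}, C_{(3)}$. I would verify this by tracking how the map $(x_i,y_{i+d}) \mapsto (y_{i+d}, x_{\ast})$ acts on subscripts: following the alternating mixed-difference arcs, the subscript orbit closes up after $\frac{2m}{3}$ steps precisely because $3 \mid m$, yielding three orbits. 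This is the analogue of the three $6$-cycles in the $m=9$ proof.

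Next I would write each $C_{(j)}$ as a concatenation of a directed $X$-path $P_j^X$, a single mixed arc, a directed $Y$-path $P_j^Y$, and a closing mixed arc, as was done explicitly for $m=9$. The crux is to give these paths in closed form in terms of $s_1,t_1$ (hence $s_2,s_3,t_2,t_3$) for each residue class of $k$ modulo $4$, and then verify that the sources and terminals match the arcs $b_i^X, b_i^Y, c_i$ declared in the statement. I would then assemble the resolution class $R_m$ by joining the three $P_j^X$ into one directed $m$-cycle using the two linking left-differences coming from $b_1^X,b_2^X$ (difference $-t_1$) and the one from $b_3^X$, and similarly joining the three $P_j^Y$ into one $m$-cycle using the right-difference linking arcs $b_1^Y,b_2^Y,b_3^Y$; the verification to perform is that these six linking differences are all distinct from $k+1$ and are consistent with the $(X_2),(Y_2)$ membership requirements. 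After $R_m$, the only mixed arcs remaining are the six incident to the path endpoints, namely the arcs $(t_i,\ldots)$ and their $Y$-counterparts.

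The final resolution class $R_{m+1}$ is built by patching in six short directed paths $Q_1',Q_2',Q_3'$ in $D[X]$ and $Q_1,Q_2,Q_3$ in $D[Y]$ so that, together with the six leftover mixed arcs, they close into two vertex-disjoint directed $m$-cycles. Here I would exhibit the explicit cycle structure
$$R_{m+1}=\{\, Q_1'(\cdots)Q_1(\cdots)Q_2'(\cdots)Q_2(\cdots),\ Q_3'(\cdots)Q_3(\cdots)\,\}$$
exactly parallel to the $R_{10}$ of Lemma~\ref{lem:9}, reading off the required source/terminal pairs and path lengths; the $X$-paths have lengths forced by the gaps between consecutive $t_i$ and the corresponding sources, and on the $Y$-side the lengths $\frac{2m}{3}-1$, $q$, $\frac{m}{3}-2-q$ appear precisely because the three $Y$-cycles had total length $2m$ and we have already consumed the $P_j^Y$ portions. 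The point is that conditions $(X_4)$ and $(Y_4)$ were engineered to guarantee these paths exist together with enough full $m$-cycles, while $(X_1),(X_3)$ and $(Y_1),(Y_3)$ handle the circulant leftover on the differences not placed into $S^X,S^Y$; by Lemma~\ref{lem:BerFavMah} (applied through the hypotheses) both leftover induced circulants on $X$ and on $Y$ decompose into $m$-cycles, and all the resulting $m$-cycles across $D[X]$ and $D[Y]$ can be paired into resolution classes since each is spanning on its half.

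I expect the main obstacle to be the bookkeeping that establishes the closed-form path decomposition of the three mixed $\frac{2m}{3}$-cycles and confirms that the endpoint subscripts agree \emph{simultaneously} with all six linking arcs and the three leftover mixed-arc pairs, uniformly across the four residue classes of $k \pmod 4$. Unlike the $m \not\equiv 0$ case, here we split into three cycles and must coordinate $X$-side and $Y$-side patching independently, so the arithmetic verifying that no difference collides with $k+1$ and that the path lengths sum correctly (in particular that $q$ can range over $\{1,\ldots,\frac{m}{3}-3\}$ without forcing degenerate or overlapping paths) is the delicate part. Once that combinatorial scaffolding is pinned down, invoking $(X_1)$--$(X_4)$, $(Y_1)$--$(Y_4)$ and Lemma~\ref{lem:BerFavMah} to finish is routine.
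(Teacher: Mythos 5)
Your overall architecture matches the paper's proof: the three mixed $\frac{2m}{3}$-cycles arising because $3\mid m$, the class $R_m$ assembled from the paths $P_j^X,P_j^Y$ and six linking arcs of pure difference, a final class $R_{m+1}$ built from six patching paths plus the six leftover mixed arcs, and the circulant leftovers handled via the hypotheses and Lemma~\ref{lem:BerFavMah}. But there is a genuine gap in how you obtain the three $X$-side paths $Q_1',Q_2',Q_3'$. You write that ``conditions $(X_4)$ and $(Y_4)$ were engineered to guarantee these paths exist together with enough full $m$-cycles,'' yet $(X_4)$ asserts only that $\C(m;S^X)-\{b_1^X,b_2^X,b_3^X\}+\{c_1,c_2,c_3\}$ decomposes into directed $m$-cycles --- it supplies no paths at all, and in fact it \emph{adds} the arcs $c_1,c_2,c_3$ rather than removing path material. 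The paths $Q_i'$ cannot come from the hypotheses; they must be constructed explicitly. The paper does this by checking that $\gcd(m,t_1)=3$ in every residue class of $k$ modulo $4$, so the arcs of pure left difference $-t_1$ form three directed $\frac{m}{3}$-cycles, and deleting one arc from each yields exactly the three $(\frac{m}{3}-1)$-paths $Q_1'=x_0x_{-t_1}x_{-2t_1}\cdots x_{t_1}$, $Q_2'=x_1x_{1-t_1}\cdots x_{t_2}$, $Q_3'=x_2x_{2-t_1}\cdots x_{t_3}$, with the required sources $x_0,x_1,x_2$ and terminals $x_{t_1},x_{t_2},x_{t_3}$. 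The three deleted arcs are precisely $c_1,c_2,c_3$, which is why $(X_4)$ folds them back into the circulant on $S^X$, and why $(X_1)$ and $(X_3)$ exclude $-t_1$ from both $S^X$ and its complement. Without this observation your argument has no source for the $X$-side paths, and the appearance of $c_1,c_2,c_3$ and of $-t_1$ in the hypotheses is left unexplained.

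A smaller point: your schematic for $R_{m+1}$ copies the $m=9$ grouping, pairing $Q_1',Q_1,Q_2',Q_2$ in one cycle and $Q_3',Q_3$ in the other. In the general case $Q_1$ alone has length $\frac{2m}{3}-1$ while each $Q_i'$ has length $\frac{m}{3}-1$, so the only grouping whose lengths close up into two $m$-cycles is $Q_1'$ with $Q_1$ (plus two mixed arcs) in one cycle, and $Q_2',Q_2,Q_3',Q_3$ (plus four mixed arcs) in the other; indeed $\frac{m}{3}-1+\frac{2m}{3}-1+2=m$ and $2(\frac{m}{3}-1)+q+(\frac{m}{3}-2-q)+4=m$. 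This is a bookkeeping correction rather than a conceptual one, but as written your $R_{m+1}$ would not consist of two directed $m$-cycles.
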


\begin{proof}
Adopt the notation and construction of resolution classes $R_0,\ldots,R_{m-1}$ from Construction~\ref{cons}.
It can be verified that, since $m \equiv 0 \pmod{3}$, the $2m$ remaining arcs of mixed differences in ($\ast$) form three directed $\frac{2m}{3}$-cycles, which we write as a concatenation of directed paths of length $\frac{m}{3}-1$ and linking arcs as follows:
\begin{eqnarray*}
C_{(1)} &=& x_{0} y_{k+1} \ldots y_{-1} x_{0}= P_{1}^X x_{t_1} y_{s_1} P_{1}^Y y_{-1}x_0, \\
C_{(2)} &=& x_{1} y_{k+3} \ldots y_{1} x_{1}=P_{2}^X x_{t_2} y_{s_2} P_{2}^Y y_{1}x_1, \\
C_{(3)} &=& x_{2} y_{k+5} \ldots y_{3} x_{2}=P_{3}^X x_{t_3} y_{s_3} P_{3}^Y y_{3}x_2.
\end{eqnarray*}
It can be verified that, for each congruency class of $k$ modulo 4, the parameters $s_i,t_i$ (for $i=1,2,3$) have values as defined in the statement of the proposition.

We use the directed paths $P_{i}^X, P_{i}^Y$ (for $i=1,2,3$), together with 6 linking arcs of pure differences, to form the resolution class $R_{m}$:
$$R_m=\{ P_{1}^X x_{t_1} x_1 P_{2}^X x_{t_2} x_2 P_{3}^X x_{t_3} x_0, P_{1}^Y y_{-1}y_{s_3} P_{3}^Y y_{3}y_{s_2} P_{2}^Y y_{1}y_{s_1} \}.$$
We have thus used the following linking arcs:
\begin{eqnarray*}
b^X_1 &=& (x_{t_1},x_1) \quad \mbox{ of pure left difference } d_1^X=1-t_1, \\
b^X_2 &=& (x_{t_2},x_2) \quad \mbox{ of pure left difference } d_1^X=1-t_1,  \\
b^X_3 &=& (x_{t_3},x_0) \quad \mbox{ of pure left difference } d_2^X=-2-t_1, \\
b^Y_1 &=& (y_{1},y_{s_1}) \quad \mbox{ of pure right difference }   d_1^Y=s_1-1, \\
b^Y_2 &=& (y_{3},y_{s_2}) \quad \mbox{ of pure right difference }   d_1^Y=s_1-1, \\
b^Y_3 &=& (y_{-1},y_{s_3}) \quad \mbox{ of pure right difference } d_2^Y=s_1+5.
\end{eqnarray*}
Note that, in all cases, none of these differences are equal to $k+1$.

We have now used up all arcs of mixed differences except for the arcs $(x_{t_i}, y_{s_i})$ for $i=1,2,3$, and arcs $(y_{-1},x_0), (y_1,x_1),(y_3,x_2)$.

To form the resolution class $R_{m+1}$, we want to find three vertex-disjoint directed paths of appropriate lengths with sources $x_0, x_1, x_2$ and terminals $x_{t_1}, x_{t_2}, x_{t_3}$ using some of the remaining arcs in $D[X]$, and three vertex-disjoint directed paths with sources $y_{s_1},y_{s_2},y_{s_3}$ and terminals $y_{-1}, y_1, y_3$ using some of the remaining arcs in $D[Y]$; these paths, together with all the remaining arcs of mixed differences, will form two vertex-disjoint directed $m$-cycles.

It can be shown in each case that $\gcd(m,t_1)=3$, so the following are indeed directed $(\frac{m}{3}-1)$-paths in $D[X]$ with the required sources and terminals:
\begin{eqnarray*}
Q_1' &=& x_0 x_{-t_1} x_{-2t_1} \ldots x_{t_1}, \\
Q_2' &=& x_1 x_{1-t_1} x_{1-2t_1} \ldots x_{t_2}, \mbox{ and} \\
Q_3' &=& x_2 x_{2-t_1} x_{2-2t_1} \ldots x_{t_3}.
\end{eqnarray*}
Observe that these paths use all arcs of difference $d^X=-t_1$ except for arcs $c_1=(x_{t_1}, x_0)$, $c_2=(x_{t_2}, x_1)$, and $c_3=(x_{t_3}, x_2)$.

Now let $S^X,S^Y \subseteq \ZZ_m^\ast$ be two sets satisfying Assumptions ($X_1$)--($X_4$),($Y_1$)--($Y_4$) of the proposition. Furthermore, let $Q_1,Q_2,Q_3$ be the pairwise vertex-disjoint directed paths in $D[Y]$ whose existence is assured by Condition ($Y_4$), so that
$$Q_1 \mbox{ is a directed } (y_{s_1},y_{-1}) \mbox{-path of length } \textstyle{\frac{2m}{3}-1}, $$
$$Q_2  \mbox{ is a directed } (y_{s_2},y_3) \mbox{-path of length } q, \mbox{ for some } \textstyle{q \in \{ 1,\ldots,\frac{m}{3}-3 \}}, \mbox{ and}$$
$$Q_3 \mbox{ is a directed } (y_{s_3},y_1) \mbox{-path of length } \textstyle{\frac{m}{3}-2-q}.$$
We may then define our next resolution class as
$$R_{m+1}=\{ Q_1'x_{t_1}y_{s_1} Q_1y_{-1}x_0,
Q_2'x_{t_2}y_{s_2}Q_2y_{3}x_2Q_3'x_{t_3}y_{s_3}Q_3y_{1}x_1 \}.$$
Now, all arcs of mixed differences have been used in resolution classes $R_1,\ldots,R_{m+1}$. In addition, we have also used up in $D[X]$:
\begin{itemize}
\item  all arcs of difference $k+1$;
\item arcs $b_i^X$, for $i=1,2,3$ (of differences $1-t_1$ and $-2-t_1$); and
\item all arcs of difference $-t_1$ except $c_i$, for $i=1,2,3$.
\end{itemize}
Assumptions ($X_1$)--($X_4$) now guarantee that the remaining subdigraph of $D[X]$ admits a $\vec{C}_m$-D.
In $D[Y]$, however, we have used up:
\begin{itemize}
\item  all arcs of difference $k+1$;
\item arcs $b_i^Y$, for $i=1,2,3$  (of differences $s_1-1$ and $s_1+5$); and
\item arcs used in the directed paths $Q_i$, for $i=1,2,3$.
\end{itemize}
Assumptions ($Y_1$)--($Y_4$) now guarantee that the remaining subdigraph of $D[Y]$ admits a $\vec{C}_m$-D.
The directed $m$-cycles from the remaining subdigraphs of $D[X]$ and $D[Y]$ can be arranged into resolution classes that complete our R$\vec{C}_m$-D of $K_{2m}^\ast$.
\end{proof}

\begin{proofof} \ref{the:main}.
Let $m$ be an odd integer, $5 \le m \le 49$. Then $K_{2m}^\ast$ admits a R$\vec{C}_m$-D by Lemma~\ref{lem:5-10} if $m=5$, by Lemma~\ref{lem:11} if $m=11$, and by Lemma~\ref{lem:9} if $m=9$. It can be verified that the computational results in Appendix A show that the conditions of Proposition~\ref{pro:<>0} hold for all odd $m$, $7 \le m \le 49$, $m \not\equiv 0 \pmod{3}$, $m \ne 11$; hence $K_{2m}^\ast$ admits a R$\vec{C}_m$-D for all such $m$. Finally, Appendix B shows that the conditions of Proposition~\ref{pro:=0} hold for all odd $m$, $15 \le m \le 45$, $m \equiv 0 \pmod{3}$; hence $K_{2m}^\ast$ admits a R$\vec{C}_m$-D for all such $m$ as well. Therefore, the statement holds for all odd $m$, $5 \le m \le 49$.
\end{proofof}

\subsection*{Acknowledgements}
The authors gratefully acknowledge support by the Natural Sciences and Engineering Research Council of Canada. Special thanks to Patrick Niesink and Ryan Murray, who wrote most of the code used to obtain the computational results, and to Aras Erzurumluo\u{g}lu for careful proofreading of the manuscript.

%\end{document}

\appendix

\section{Computational results --- Case $m \not\equiv 0 \pmod{3}$}

For each value of $m$ we give a set $S \subseteq \ZZ_m^\ast$ satisfying Conditions ($Y_1$) -- ($Y_4$) of Proposition~\ref{pro:<>0} (if $m \ne 11$), or Conditions ($X_1$) -- ($X_4$) from the proof of Lemma~\ref{lem:11} (if $m=11$). The required differences appear in bold type. In addition, we give a desired decomposition into directed $m$-cycles $C_i$ and vertex-disjoint directed paths $Q_1$ and $Q_2$. If $m$ is not prime, we also give a partition of $\ZZ_m^\ast-(S \cup \{ \frac{m+1}{2} \})$ satisfying the assumptions of Lemma~\ref{lem:BerFavMah}.

\begin{itemize}
\item $m=7$ \\
$S =\{ 2,\mathbf{3,6} \}$ \\
$Q_1 = (5, 0, 2)$ \\
$Q_2 = (3, 6, 1, 4)$ \\
$C_1 =(0,3,5,4,6,2,1,0)$ \\
$C_2 =(0,6,5,1,3,2,4,0)$

\item $m=11$ \\
$S=\{ \mathbf{3},4,9, \mathbf{10} \}$ \\
$Q_1=(7, 10, 9, 2, 0, 3, 1, 4)$ \\
$Q_2=(5, 8, 6)$ \\
$C_1=(0, 10, 2, 6, 9, 8, 1, 5, 4, 3, 7, 0)$ \\
$C_2=(0, 4, 8, 7, 6, 10, 3, 2, 5, 9, 1, 0)$ \\
$C_3=(0, 9, 7, 5, 3, 6, 4, 2, 1, 10, 8, 0)$

\item $m=13$ \\
$S=\{ \mathbf{1}, 2, 3, \mathbf{4} \}$ \\
$Q_1=(11, 1, 5, 7, 10)$ \\
$Q_2=(6, 9, 0, 3, 4, 8, 12, 2)$ \\
$C_1=(0, 1, 2, 4, 5, 6, 8, 9, 10, 12, 3, 7, 11, 0)$ \\
$C_2=(0, 4, 7, 8, 11, 2, 5, 9, 12, 1, 3, 6, 10, 0)$ \\
$C_3=(0, 2, 3, 5, 8, 10, 1, 4, 6, 7, 9, 11, 12, 0)$

\item $m=17$ \\
$S=\{ 1,\mathbf{2}, 3, \mathbf{5} \}$ \\
$Q_1=(15, 16, 1, 4, 7, 9, 14, 2, 5, 6, 11, 13)$ \\
$Q_2=(8, 10, 12, 0, 3)$ \\
$C_1=(0, 2, 4, 6, 8, 9, 12, 13, 14, 15, 1, 3, 5, 7, 10, 11, 16, 0)$ \\
$C_2=(0, 5, 8, 11, 14, 16, 2, 3, 4, 9, 10, 13, 1, 6, 7, 12, 15, 0)$ \\
$C_3=(0, 1, 2, 7, 8, 13, 16, 4, 5, 10, 15, 3, 6, 9, 11, 12, 14, 0)$

\item $m=19$ \\
$S=\{ 2,\mathbf{12}, \mathbf{15}\}$ \\
$Q_1=(17, 0, 15, 11, 7, 3, 5)$ \\
$Q_2=(9, 2, 4, 6, 8, 10, 12, 14, 16, 18, 1, 13)$ \\
$C_1=(0, 12, 8, 4, 16, 9, 5, 1, 3, 18, 11, 13, 15, 17, 10, 6, 2, 14, 7, 0)$ \\
$C_2=(0, 2, 17, 13, 6, 18, 14, 10, 3, 15, 8, 1, 16, 12, 5, 7, 9, 11, 4, 0)$

\item $m=23$ \\
$S=\{ 1,2,\mathbf{15, 18} \}$ \\
$Q_1=(21, 22, 17, 9, 1, 19, 20, 12, 7, 8, 10, 5, 0, 2, 4, 6)$ \\
$Q_2=(11, 3, 18, 13, 14, 15, 16)$ \\
$C_1=(0, 15, 7, 22, 14, 9, 4, 19, 11, 6, 1, 2, 3, 5, 20, 21, 16, 17, 18, 10, 12, 13, 8, 0)$ \\
$C_2=(0, 18, 19, 14, 16, 8, 9, 10, 11, 13, 15, 17, 12, 4, 5, 6, 7, 2, 20, 22, 1, 3, 21, 0)$ \\
$C_3=(0, 1, 16, 18, 20, 15, 10, 2, 17, 19, 21, 13, 5, 7, 9, 11, 12, 14, 6, 8, 3, 4, 22, 0)$

\item $m=25$ \\
$S=\{ 1, 2, \mathbf{4, 7}\} $ \\
$Q_1=(23, 2, 6, 10, 14, 15, 16, 17, 19)$ \\
$Q_2=(12, 13, 20, 21, 0, 7, 8, 9, 11, 18, 22, 24, 1, 3, 4, 5)$ \\
$C_1=(0, 4, 8, 12, 16, 20, 24, 6, 7, 11, 15, 19, 1, 2, 3, 10, 17, 21, 22, 23, 5, 9, 13, 14, 18, 0)$ \\
$C_2=(0, 1, 5, 6, 8, 15, 22, 4, 11, 13, 17, 24, 3, 7, 14, 16, 18, 20, 2, 9, 10, 12, 19, 21, 23, 0)$ \\
$C_3=(0, 2, 4, 6, 13, 15, 17, 18, 19, 20, 22, 1, 8, 10, 11, 12, 14, 21, 3, 5, 7, 9, 16, 23, 24, 0)$ \\
Partition contains:  $\{\pm 3,\pm 5\}$, $\{\pm 6,\pm 10\}$, and $\{ e \}$ for each remaining difference $e$

\item $m=29$ \\
$S=\{ 1, 2, \mathbf{5, 8}\} $ \\
$Q_1=(27, 28, 7, 8, 9, 11, 16, 21, 23, 25, 26, 5, 10, 12, 13, 15, 17, 18, 20, 22)$ \\
$Q_2=(14, 19, 24, 0, 1, 2, 3, 4, 6)$ \\
$C_1=(0, 5, 13, 18, 23, 28, 4, 9, 14, 22, 1, 6, 7, 15, 20, 25, 27, 3, 8, 16, 24, 26, 2, 10, 11, 12, 17,$ $ 19, 21, 0)$ \\
$C_2=(0, 8, 10, 15, 23, 2, 7, 9, 17, 22, 24, 25, 4, 12, 20, 21, 26, 28, 1, 3, 5, 6, 11, 13, 14, 16, 18,$ $ 19, 27, 0)$ \\
$C_3=(0, 2, 4, 5, 7, 12, 14, 15, 16, 17, 25, 1, 9, 10, 18, 26, 27, 6, 8, 13, 21, 22, 23, 24, 3, 11, 19,$ $ 20, 28, 0)$

\item $m=31$ \\
$S=\{1, \mathbf{21, 24} \}$ \\
$Q_1=(29, 19, 9, 30, 23, 13, 14, 4, 28, 18, 8)$ \\
$Q_2=(15, 16, 17, 10, 11, 12, 5, 6, 7, 0, 1, 2, 3, 24, 25, 26, 27, 20, 21, 22)$ \\
$C_1=(0, 21, 11, 4, 5, 26, 16, 9, 10, 3, 27, 17, 7, 28, 29, 22, 12, 2, 23, 24, 14, 15, 8, 1, 25, 18, 19,$ $ 20, 13, 6, 30, 0)$ \\
$C_2=(0, 24, 17, 18, 11, 1, 22, 23, 16, 6, 27, 28, 21, 14, 7, 8, 9, 2, 26, 19, 12, 13, 3, 4, 25, 15, 5,$ $ 29, 30, 20, 10, 0)$

\item $m=35$ \\
$S=\{ 1, \mathbf{24, 27} \}$ \\
$Q_1=(33, 22, 14, 3, 27, 16, 5, 32, 21, 13, 2, 29, 18, 10, 34, 26, 15, 7, 8, 0, 1, 28, 20, 9)$ \\
$Q_2=(17, 6, 30, 19, 11, 12, 4, 31, 23, 24, 25)$ \\
$C_1=(0, 24, 16, 8, 9, 1, 25, 26, 27, 28, 17, 18, 19, 20, 12, 13, 14, 6, 7, 31, 32, 33, 34, 23, 15, 4,$ $ 5, 29, 21, 10, 2, 3, 30, 22, 11, 0)$ \\
$C_2=(0, 27, 19, 8, 32, 24, 13, 5, 6, 33, 25, 14, 15, 16, 17, 9, 10, 11, 3, 4, 28, 29, 30, 31, 20, 21,$ $ 22, 23, 12, 1, 2, 26, 18, 7, 34, 0)$ \\
Partition contains:  $\{\pm 5,\pm 7\}$, $\{\pm 10,\pm 14\}$, $\{\pm 15,\pm 2\}$, and $\{ e \}$ for each remaining difference $e$

\item $m=37$ \\
$S=\{ 1, \mathbf{7, 10} \}$ \\
$Q_1=(35, 36, 0, 1, 11, 12, 13, 14, 15, 25, 26, 27, 28)$ \\
$Q_2=(18, 19, 29, 2, 9, 10, 20, 21, 22, 23, 30, 3, 4, 5, 6, 16, 17, 24, 31, 32, 33, 34, 7, 8)$ \\
$C_1=(0, 7, 14, 21, 28, 1, 8, 15, 22, 29, 36, 9, 16, 26, 33, 6, 13, 23, 24, 34, 35, 5, 12, 19, 20, 30,$ $31, 4, 11, 18, 25, 32, 2, 3, 10, 17, 27, 0)$ \\
$C_2=(0, 10, 11, 21, 31, 1, 2, 12, 22, 32, 5, 15, 16, 23, 33, 3, 13, 20, 27, 34, 4, 14, 24, 25, 35, 8, 9,$ $ 19, 26, 36, 6, 7, 17, 18, 28, 29, 30, 0)$

\item $m=41$ \\
$S=\{ 1, \mathbf{8, 11} \}$ \\
$Q_1=(39, 6, 14, 15, 23, 24, 32, 40, 7, 18, 26, 34, 1, 2, 10, 11, 19, 27, 35, 36, 3, 4, 12, 13, 21, 22,$ $ 30, 31)$ \\
$Q_2=(20, 28, 29, 37, 38, 5, 16, 17, 25, 33, 0, 8, 9)$ \\
$C_1=(0, 11, 12, 23, 31, 1, 9, 17, 28, 36, 6, 7, 8, 19, 20, 21, 32, 2, 3, 14, 22, 33, 34, 35, 5, 13, 24,$ $ 25, 26, 37, 4, 15, 16, 27, 38, 39, 40, 10, 18, 29, 30, 0)$ \\
$C_2=(0, 1, 12, 20, 31, 32, 33, 3, 11, 22, 23, 34, 4, 5, 6, 17, 18, 19, 30, 38, 8, 16, 24, 35, 2, 13, 14,$ $  25,36, 37, 7, 15, 26, 27, 28, 39, 9, 10, 21, 29, 40, 0)$

\item $m=43$ \\
$S=\{ 1, \mathbf{30, 33} \}$ \\
$Q_1=(41, 28, 15, 2, 32, 19, 6, 36, 23, 10, 0, 33, 34, 24, 11)$ \\
$Q_2=(21, 22, 12, 13, 3, 4, 5, 35, 25, 26, 16, 17, 18, 8, 9, 42, 29, 30, 20, 7, 37, 38, 39, 40, 27, 14,$ $ 1, 31)$ \\
$C_1=(0, 30, 31, 32, 33, 20, 10, 11, 1, 34, 21, 8, 38, 28, 18, 19, 9, 39, 29, 16, 3, 36, 26, 27, 17, 4,$ $ 37, 24, 25, 12, 2, 35, 22, 23, 13, 14, 15, 5, 6, 7, 40, 41, 42, 0)$ \\
$C_2=(0, 1, 2, 3, 33, 23, 24, 14, 4, 34, 35, 36, 37, 27, 28, 29, 19, 20, 21, 11, 12, 42, 32, 22, 9, 10,$ $ 40, 30, 17, 7, 8, 41, 31, 18, 5, 38, 25, 15, 16, 6, 39, 26, 13, 0)$

\item $m=47$ \\
$S=\{ 1, \mathbf{33, 36} \}$ \\
$Q_1=(45, 46, 35, 24, 13, 2, 3, 4, 40, 29, 18, 19, 5, 41, 27, 16, 17, 6, 7, 43, 32, 33, 22, 8, 44, 30,$ $31, 20, 21, 10, 11, 12)$ \\
$Q_2=(23, 9, 42, 28, 14, 0, 36, 37, 38, 39, 25, 26, 15, 1, 34)$ \\
$C_1=(0, 33, 34, 20, 6, 42, 43, 29, 15, 16, 2, 35, 36, 22, 23, 12, 1, 37, 26, 27, 13, 14, 3, 39, 28, 17,$ $ 18, 4, 5, 38, 24, 25, 11, 44, 45, 31, 32, 21, 7, 40, 41, 30, 19, 8, 9, 10, 46, 0)$ \\
$C_2=(0, 1, 2, 38, 27, 28, 29, 30, 16, 5, 6, 39, 40, 26, 12, 13, 46, 32, 18, 7, 8, 41, 42, 31, 17, 3, 36,$ $ 25, 14, 15, 4, 37, 23, 24, 10, 43, 44, 33, 19, 20, 9, 45, 34, 35, 21, 22, 11, 0)$

\item $m=49$ \\
$S=\{ 2, \mathbf{10, 13} \}$ \\
$Q_1=(47, 8, 18, 28, 38, 48, 9, 19, 21, 31, 44, 46, 10, 23, 25, 35, 37)$ \\
$Q_2=(24, 34, 36, 0, 2, 12, 22, 32, 45, 6, 16, 26, 39, 41, 5, 7, 20, 33, 43, 4, 14, 27, 29, 42, 3, 13,$ $15, 17, 30, 40, 1, 11)$ \\
$C_1=(0, 10, 20, 30, 43, 7, 9, 22, 35, 45, 47, 11, 13, 23, 36, 38, 40, 4, 17, 19, 32, 42, 6, 8, 21, 34,$ $  44, 5, 18, 31, 33, 46, 48, 12, 14, 24, 26, 28, 41, 2, 15, 25, 27, 37, 1, 3, 16, 29, 39, 0)$ \\
$C_2=(0, 13, 26, 36, 46, 7, 17, 27, 40, 42, 44, 8, 10, 12, 25, 38, 2, 4, 6, 19, 29, 31, 41, 43, 45, 9,$  $11, 21, 23, 33, 35, 48, 1, 14, 16, 18, 20, 22, 24, 37, 39, 3, 5, 15, 28, 30, 32, 34, 47, 0)$ \\
Partition contains:  $\{\pm 7,\pm 1\}$, $\{\pm 14,\pm 3\}$, $\{\pm 21,\pm 4\}$, and $\{ e \}$ for each remaining difference $e$

\end{itemize}

\section{Computational results --- Case $m \equiv 0 \pmod{3}$}

For each value of $m$ we give sets $S^X,S^Y \subseteq \ZZ_m^\ast$ satisfying Conditions ($X_1$) -- ($X_4$), ($Y_1$) -- ($Y_4$) of Proposition~\ref{pro:=0} (if $m \ge 15$), or from the proof of Lemma~\ref{lem:9} (if $m=9$). The required differences appear in bold type. In addition, we give a desired decomposition of a subgraph of $D[X]$ into directed $m$-cycles $C_i'$ and (for $m=9$ only) pairwise vertex-disjoint directed paths $Q_i'$, and
a desired decomposition of a subgraph of $D[Y]$ into directed $m$-cycles $C_i$ and pairwise vertex-disjoint directed paths $Q_i$. We also give a partition of $\ZZ_m^\ast-(S \cup \{ \frac{m+1}{2} \})$ satisfying the assumptions of Lemma~\ref{lem:BerFavMah}.

\begin{itemize}
\item $m=9$ \\
$S^X=\{ 1,2,3,\mathbf{4}, 6,\mathbf{7} \}$ \\
$Q_1'=(1, 3)$ \\
$Q_2'=(2, 4)$ \\
$Q_3'=(0, 6, 7, 8, 5)$ \\
$C_1'=(0, 4, 8, 3, 7, 5, 6, 1, 2, 0)$ \\
$C_2'=(0, 7, 2, 6, 8, 1, 4, 5, 3, 0)$ \\
$C_3'=(0, 3, 6, 4, 7, 1, 5, 2, 8, 0)$ \\
$C_4'=(0, 1, 8, 2, 3, 5, 7, 4, 6, 0)$ \\
$C_5'=(0, 2, 5, 8, 6, 3, 4, 1, 7, 0)$ \\
Partition contains: $\{ 8 \}$

\smallskip

$S^Y=\{ \mathbf{1}, 3,4,6,\mathbf{7},8 \}$ \\
$Q_1=(2, 3)$ \\
$Q_2=(4, 7, 1)$ \\
$Q_3=(6, 5, 0, 8)$ \\
$C_1=(0, 1, 8, 2, 5, 6, 7, 4, 3, 0)$ \\
$C_2=(0, 7, 8, 5, 3, 1, 4, 2, 6, 0)$ \\
$C_3=(0, 3, 6, 4, 1, 7, 5, 2, 8, 0)$ \\
$C_4=(0, 6, 1, 5, 4, 8, 3, 7, 2, 0)$ \\
$C_5=(0, 4, 5, 8, 7, 6, 3, 2, 1, 0)$ \\
Partition contains: $\{ 2 \}$

\item $m=15$ \\
$S^X=\{ \mathbf{4, 7}, 9\}$ \\
$C_1'=(0, 4, 13, 7, 11, 5, 9, 3, 12, 1, 10, 14, 8, 2, 6, 0)$ \\
$C_2'=(0, 7, 1, 8, 12, 6, 13, 5, 14, 3, 10, 4, 11, 2, 9, 0)$ \\
$C_3'=(0, 9, 13, 2, 11, 3, 7, 14, 6, 10, 1, 5, 12, 4, 8, 0)$ \\
Partition contains:  $\{\pm 3,\pm 5\}$,  and $\{ e \}$ for each remaining difference $e$

\smallskip

$S^Y=\{ \mathbf{1}, 5, 6, 9, \mathbf{10} \}$ \\
$Q_1=(11, 6, 7, 12, 2, 8, 9, 4, 5, 14)$ \\
$Q_2=(13, 3)$ \\
$Q_3=(0, 10, 1)$ \\
$C_1=(0, 1, 2, 12, 7, 13, 8, 3, 4, 14, 9, 10, 11, 5, 6, 0)$ \\
$C_2=(0, 5, 11, 2, 7, 1, 6, 12, 3, 9, 14, 8, 13, 4, 10, 0)$ \\
$C_3=(0, 6, 11, 1, 7, 8, 2, 3, 12, 13, 14, 5, 10, 4, 9, 0)$ \\
$C_4=(0, 9, 3, 8, 14, 4, 13, 7, 2, 11, 12, 6, 1, 10, 5, 0)$ \\
Partition contains:  $\{\pm 3,\pm 2\}$,  and $\{ e \}$ for each remaining difference $e$

\item $m=21$ \\
$S^X=\{ \mathbf{1, 4}, 18\}$ \\
$C_1'=(0, 1, 2, 3, 4, 5, 6, 7, 8, 9, 10, 11, 12, 13, 14, 15, 19, 16, 20, 17, 18, 0)$ \\
$C_2'=(0, 4, 8, 12, 9, 6, 10, 14, 18, 19, 1, 5, 2, 20, 3, 7, 11, 15, 16, 13, 17, 0)$ \\
$C_3'=(0, 18, 15, 12, 16, 17, 14, 11, 8, 5, 9, 13, 10, 7, 4, 1, 19, 20, 2, 6, 3, 0)$ \\
Partition contains:  $\{\pm 6,\pm 7\}$,  $\{\pm 9,\pm 2\}$,and $\{ e \}$ for each remaining difference $e$

\smallskip

$S^Y=\{ 3,\mathbf{4}, \mathbf{10}, 13,  18\}$ \\
$Q_1=(5, 15, 19, 2, 12, 16, 13, 17, 0, 4, 8, 18, 10, 20)$ \\
$Q_2=(7, 11, 14, 6, 3)$ \\
$Q_3=(9, 1)$ \\
$C_1=(0, 10, 14, 18, 1, 11, 15, 4, 7, 17, 6, 19, 8, 12, 9, 13, 16, 5, 2, 20, 3, 0)$ \\
$C_2=(0, 13, 2, 15, 12, 4, 1, 14, 11, 3, 6, 10, 7, 20, 17, 9, 19, 16, 8, 5, 18, 0)$ \\
$C_3=(0, 3, 16, 19, 1, 4, 14, 17, 20, 2, 6, 9, 12, 15, 18, 7, 10, 13, 5, 8, 11, 0)$ \\
$C_4=(0, 18, 15, 7, 4, 17, 14, 3, 13, 10, 2, 5, 9, 6, 16, 20, 12, 1, 19, 11, 8, 0)$ \\
Partition contains:  $\{\pm 6,\pm 7\}$,  $\{\pm 9,\pm 2\}$, and $\{ e \}$ for each remaining difference $e$

\item $m=27$ \\
$S^X=\{ 3, \mathbf{22, 25} \}$ \\
$C_1'=(0, 25, 23, 21, 19, 17, 15, 13, 11, 6, 4, 1, 26, 2, 24, 22, 20, 18, 16, 14, 9, 12, 7, 10, 5, 8, 3, 0)$ \\
$C_2'=(0, 22, 25, 20, 15, 18, 13, 16, 19, 14, 17, 12, 10, 8, 11, 9, 4, 7, 2, 5, 3, 6, 1, 23, 26, 21, 24, 0)$ \\
$C_3'=(0, 3, 25, 1, 4, 26, 24, 19, 22, 17, 20, 23, 18, 21, 16, 11, 14, 12, 15, 10, 13, 8, 6, 9, 7, 5, 2, 0)$ \\
Partition contains:  $\{\pm 6,\pm 1\}$, $\{\pm 9,\pm 4\}$, $\{\pm 12,\pm 7\}$, and $\{ e \}$ for each remaining difference $e$

\smallskip

$S^Y=\{ 3,4,\mathbf{19}, 24, \mathbf{25} \}$ \\
$Q_1=(20, 12, 4, 2, 5, 9, 13, 17, 21, 19, 16, 8, 11, 15, 18, 10, 7, 26)$ \\
$Q_2=(22, 14, 6, 25, 23, 0, 3)$ \\
$Q_3=(24, 1)$ \\
$C_1=(0, 19, 11, 3, 1, 26, 18, 16, 14, 12, 15, 7, 4, 23, 20, 24, 22, 25, 17, 9, 6, 10, 2, 21, 13, 5, 8, 0)$ \\
$C_2=(0, 25, 2, 6, 4, 7, 11, 8, 12, 9, 1, 5, 24, 16, 13, 10, 14, 17, 20, 23, 21, 18, 15, 19, 22, 26, 3, 0)$ \\
$C_3=(0, 4, 1, 25, 22, 19, 17, 14, 11, 9, 12, 10, 8, 6, 3, 7, 5, 2, 26, 23, 15, 13, 16, 20, 18, 21, 24, 0)$ \\
$C_4=(0, 24, 21, 25, 1, 4, 8, 5, 3, 6, 9, 7, 10, 13, 11, 14, 18, 22, 20, 17, 15, 12, 16, 19, 23, 26, 2, 0)$ \\
Partition contains:  $\{\pm 6,\pm 1\}$, $\{\pm 9,\pm 5\}$, $\{\pm 12,\pm 7\}$,  and $\{ e \}$ for each remaining difference $e$

\item $m=33$ \\
$S^X=\{ 11, 12, \mathbf{19, 22}\}$ \\
$C_1'=(0, 19, 5, 24, 10, 29, 15, 1, 20, 6, 28, 14, 25, 11, 30, 8, 27, 16, 2, 13, 32, 21, 7, 18, 4, 26, 12,$ $ 23, 9, 31, 17, 3, 22, 0)$ \\
$C_2'=(0, 22, 1, 12, 31, 20, 9, 28, 6, 17, 29, 18, 7, 19, 8, 30, 16, 5, 27, 13, 24, 3, 25, 14, 26, 15, 4,$ $ 23, 2, 21, 10, 32, 11, 0)$ \\
$C_3'=(0, 11, 22, 8, 19, 30, 9, 20, 31, 10, 21, 32, 18, 29, 7, 26, 4, 15, 27, 5, 16, 28, 17, 6, 25, 3, 14,$ $ 2, 24, 13, 1, 23, 12, 0)$ \\
$C_4'=(0, 12, 24, 2, 14, 3, 15, 26, 5, 17, 28, 7, 29, 8, 20, 32, 10, 22, 11, 23, 1, 13, 25, 4, 16, 27, 6,$ $ 18, 30, 19, 31, 9, 21, 0)$ \\
Partition contains:  $\{\pm 3,\pm 1\}$, $\{\pm 6,\pm 2\}$, $\{\pm 9,\pm 4\}$,  $\{\pm 15,\pm 5\}$, and $\{ e \}$ for each remaining difference $e$

\smallskip

$S^Y=\{ 1, \mathbf{7, 13}, 26 \}$ \\
$Q_1=(8, 21, 14, 27, 28, 2, 15, 16, 29, 9, 22, 23, 24, 17, 30, 4, 11, 18, 25, 5, 31, 32)$ \\
$Q_2=(12, 19, 20, 13, 26, 6, 7, 0, 1)$ \\
$Q_3=(10, 3)$ \\
$C_1=(0, 7, 20, 27, 1, 14, 21, 28, 8, 15, 22, 29, 30, 23, 16, 9, 2, 3, 4, 17, 10, 11, 24, 31, 5, 12, 13,$ $ 6, 32, 25, 18, 19, 26, 0)$ \\
$C_2=(0, 13, 14, 7, 8, 1, 2, 9, 10, 17, 18, 11, 12, 25, 26, 27, 20, 21, 22, 15, 28, 29, 3, 16, 23, 30,$ $ 31, 24, 4, 5, 6, 19, 32, 0)$ \\
$C_3=(0, 26, 19, 12, 5, 18, 31, 11, 4, 30, 10, 23, 3, 29, 22, 2, 28, 21, 1, 27, 7, 14, 15, 8, 9, 16, 17,$ $ 24, 25, 32, 6, 13, 20, 0)$ \\
Partition contains:  $\{\pm 3,\pm 11\}$, $\{\pm 6,\pm 2\}$, $\{\pm 9,\pm 4\}$, $\{\pm 12,\pm 5\}$, $\{\pm 15,\pm 8\}$,   and $\{ e \}$ for each remaining difference $e$

\item $m=39$ \\
$S^X=\{ \mathbf{13, 16}, 24, 26\}$ \\
$C_1'=(0, 13, 26, 3, 16, 29, 6, 19, 32, 9, 22, 35, 12, 25, 38, 15, 28, 2, 18, 31, 5, 21, 34, 8, 24, 37,$ $ 11, 27, 1, 14, 30, 4, 17, 33, 7, 20, 36, 10, 23, 0)$ \\
$C_2'=(0, 16, 32, 6, 22, 38, 12, 28, 15, 2, 26, 13, 29, 3, 19, 35, 9, 25, 1, 17, 30, 7, 23, 10, 36, 21,$ $ 37, 14, 27, 4, 20, 33, 18, 5, 31, 8, 34, 11, 24, 0)$ \\
$C_3'=(0, 26, 11, 37, 24, 9, 35, 22, 7, 33, 20, 5, 18, 3, 29, 14, 38, 25, 10, 34, 19, 4, 30, 15, 31, 16,$ $ 1, 27, 12, 36, 23, 8, 21, 6, 32, 17, 2, 28, 13, 0)$ \\
$C_4'=(0, 24, 11, 35, 20, 7, 31, 18, 34, 21, 8, 32, 19, 6, 30, 17, 4, 28, 5, 29, 16, 3, 27, 14, 1, 25, 12,$ $ 38, 23, 36, 13, 37, 22, 9, 33, 10, 26, 2, 15, 0)$ \\
Partition contains:  $\{\pm 3,\pm 1\}$, $\{\pm 6,\pm 2\}$, $\{\pm 9,\pm 4\}$, $\{\pm 12,\pm 5\}$, $\{\pm 18,\pm 7\}$,   and $\{ e \}$ for each remaining difference $e$

\smallskip

$S^Y=\{ 2,7, \mathbf{28, 34} \}$ \\
$Q_1=(29, 18, 7, 14, 16, 23, 25, 27, 22, 17, 12, 19, 21, 10, 5, 0, 28, 35, 24, 13, 2, 30, 32, 34, 36,$ $38)$ \\
$Q_2=(31, 20, 9, 37, 26, 15, 4, 11, 6, 8, 3)$ \\
$Q_3=(33, 1)$ \\
$C_1=(0, 34, 23, 12, 1, 35, 3, 37, 5, 7, 2, 36, 4, 32, 21, 16, 18, 25, 20, 15, 17, 6, 13, 8, 10, 38, 27,$ $ 29, 31, 33, 22, 24, 26, 28, 30, 19, 14, 9, 11, 0)$ \\
$C_2=(0, 7, 9, 16, 11, 13, 15, 10, 17, 19, 8, 36, 25, 32, 27, 34, 2, 4, 38, 6, 1, 3, 5, 12, 14, 21, 28,$ $23, 18, 20, 22, 29, 24, 31, 26, 33, 35, 30, 37, 0)$ \\
$C_3=(0, 2, 9, 4, 6, 34, 29, 36, 31, 38, 1, 8, 15, 22, 11, 18, 13, 20, 27, 16, 5, 33, 28, 17, 24, 19, 26,$ $ 21, 23, 30, 25, 14, 3, 10, 12, 7, 35, 37, 32, 0)$ \\
Partition contains:  $\{\pm 3,\pm 13\}$, $\{\pm 6,\pm 1\}$, $\{\pm 9,\pm 4\}$, $\{\pm 12,\pm 8\}$, $\{\pm 15,\pm 10\}$, $\{\pm 18,\pm 14\}$,   and $\{ e \}$ for each remaining difference $e$

\item $m=45$ \\
$S^X=\{ \mathbf{4, 7}, 39\}$ \\
$C_1'=(0, 4, 8, 12, 16, 20, 24, 28, 32, 36, 43, 5, 9, 13, 17, 21, 25, 29, 33, 37, 41, 2, 6, 10, 14, 18,$ $ 22, 26, 30, 34, 38, 42, 1, 40, 44, 3, 7, 11, 15, 19, 23, 27, 31, 35, 39, 0)$ \\
$C_2'=(0, 7, 1, 5, 12, 19, 26, 33, 27, 21, 28, 35, 42, 4, 11, 18, 25, 32, 39, 43, 2, 9, 16, 23, 30, 37,$  $44, 6, 13, 20, 14, 8, 15, 22, 29, 36, 40, 34, 41, 3, 10, 17, 24, 31, 38, 0)$ \\
$C_3'=(0, 39, 33, 40, 1, 8, 2, 41, 35, 29, 23, 17, 11, 5, 44, 38, 32, 26, 20, 27, 34, 28, 22, 16, 10, 4,$ $ 43, 37, 31, 25, 19, 13, 7, 14, 21, 15, 9, 3, 42, 36, 30, 24, 18, 12, 6, 0)$ \\
Partition contains:  $\{\pm 3,\pm 5\}$, $\{\pm 9,\pm 10\}$, $\{\pm 12,\pm 20\}$, $\{\pm 15,\pm 1\}$, $\{\pm 18,\pm 2\}$, $\{\pm 21,\pm 8\}$,   and $\{ e \}$ for each remaining difference $e$

\smallskip

$S^Y=\{ \mathbf{10, 16}, 31, 35\}$ \\
$Q_1=(11, 21, 31, 2, 12, 22, 38, 9, 19, 29, 39, 4, 14, 24, 40, 30, 20, 10, 0, 35, 25, 41, 6, 37, 27, 17,$ $ 7, 42, 28, 44)$ \\
$Q_2=(13, 23, 33, 43, 8, 18, 34, 5, 36, 26, 16, 32, 3)$ \\
$Q_3=(15, 1)$ \\
$C_1=(0, 10, 20, 30, 40, 5, 21, 37, 23, 13, 3, 38, 28, 18, 4, 39, 29, 15, 31, 41, 12, 2, 33, 19, 35, 6,$ $ 16, 26, 36, 1, 32, 22, 8, 24, 34, 44, 9, 25, 11, 42, 7, 17, 27, 43, 14, 0)$ \\
$C_2=(0, 16, 6, 22, 32, 42, 13, 29, 19, 9, 44, 30, 1, 17, 3, 34, 20, 36, 7, 38, 24, 10, 41, 31, 21, 11,$ $ 27, 37, 2, 18, 8, 43, 33, 23, 39, 25, 15, 5, 40, 26, 12, 28, 14, 4, 35, 0)$ \\
$C_3=(0, 31, 17, 33, 4, 20, 6, 41, 27, 13, 44, 34, 24, 14, 30, 16, 2, 37, 8, 39, 10, 26, 42, 32, 18, 28,$ $ 38, 3, 19, 5, 15, 25, 35, 21, 7, 23, 9, 40, 11, 1, 36, 22, 12, 43, 29, 0)$ \\
Partition contains:  $\{\pm 3,\pm 5\}$, $\{\pm 6,\pm 20\}$, $\{\pm 9,\pm 1\}$, $\{\pm 12,\pm 2\}$, $\{\pm 15,\pm 4\}$, $\{\pm 18,\pm 7\}$, $\{\pm 21,\pm 8\}$,   and $\{ e \}$ for each remaining difference $e$

\end{itemize}


\begin{thebibliography}{99}

\bibitem{AdaBry} P. Adams, D. Bryant, Resolvable directed cycle systems of all indices for cycle length 3 and 4, unpublished.

%\bibitem{AlsGavSaj} B. Alspach, H. Gavlas, M. \v{S}ajna, H. Verrall,  Cycle decompositions IV: Complete directed graphs and fixed length directed cycles, {\em J. Combin. Theory Ser. A} {\bf 103} (2003),  165--208.

\bibitem{AlsHag} B. Alspach, R. H\"{a}ggkvist, Some observations on the Oberwolfach problem,
{\em J. Graph Theory} {\bf 9} (1985), 177--187.

\bibitem{AlsSch} B. Alspach, P. J. Schellenberg, D. R. Stinson, D. Wagner, The Oberwolfach problem and factors of uniform odd length cycles, {\em J. Combin. Theory Ser. A} {\bf 52} (1989), 20--43.

\bibitem{BenZha} F. E. Bennett, X. Zhang,  Resolvable Mendelsohn designs with block size $4$, {\em Aequationes Math.} {\bf  40} (1990), 248--260.

\bibitem{BerFavMah} J.-C. Bermond, O. Favaron, M. Mah\'{e}o, Hamiltonian decomposition of Cayley graphs of degree 4, {\em J. Combin. Theory Ser. B} {\bf 46} (1989), 142--153.

\bibitem{BerGerSot} J.-C. Bermond, A. Germa, D. Sotteau, Resolvable decomposition of $K_n^\ast$,
{\em J. Combin. Theory Ser. A} {\bf 26} (1979), 179--185.

\bibitem{BurSaj} A. Burgess, M. \v{S}ajna, On the directed Oberwolfach Problem with equal cycle lengths, {\em Elec. J. Comb.} {\bf  21} (2014), P1.15 (14 pages).

\bibitem{ColDin} C. J. Colbourn, J. H. Dinitz (editors), {\em Handbook of combinatorial designs,} Chapman and Hall/CRC, Boca Raton, FL, 2007.

%\bibitem{HofSch} D. G. Hoffman, P. J. Schellenberg, The existence of $C_k$-factorizations of $K_{2n}-F$, {\em Discrete Math.} {\bf 97} (1991),  243--250.

%\bibitem{HuaKot} C. Huang, A. Kotzig, A. Rosa, On a variation of the Oberwolfach problem, {\em Discrete Math.} {\bf 27} (1979), 261--277.

%\bibitem{Liu} J. Liu,  The equipartite Oberwolfach problem with uniform tables, {\em J. Combin. Theory Ser. A} {\bf 101} (2003), 20-34.

%\bibitem{Til} T.W. Tillson, A hamiltonian decomposition of $K_{2m}^\ast$, $2m \ge 8$, {\em J. Comb. Theory Ser. B} {\bf 29} (1980), 69--74.
\end{thebibliography}
\end{document}